\newcommand{\arc}[1]{{%
  \setbox9=\hbox{#1}%
  \ooalign{\resizebox{\wd9}{\height}{\texttoptiebar{\phantom{A}}}\cr#1}}}
\providecommand{\U}[1]{\protect \rule{.1in}{.1in}}
\definecolor{ashgrey}{rgb}{0.7, 0.75, 0.81}
\newtheorem{theorem}{Theorem}
\newtheorem*{theorema}{Theorem A}
\newtheorem{claim}[theorem]{Claim}
\newtheorem{corollary}[theorem]{Corollary}
\newtheorem{observation}[theorem]{Observation}
\newtheorem{lemma}[theorem]{Lemma}
\newtheorem{proposition}[theorem]{Proposition}
\newtheorem{remark}[theorem]{Remark}
\begin{document}

\title{Gluing $CAT(0)$ domains}
\author{Charalampos Charitos, Ioannis Papadoperakis
\and and Georgios Tsapogas\thanks{Corresponding author}\\Agricultural University of Athens }
\maketitle

\begin{abstract}
In this work we describe a class of subsets of the Euclidean plane which, with the induced length metric, are locally $CAT(0)$ spaces and we show that the gluing of two such subsets along a piece of their boundary is again a locally $CAT(0)$ space provided that the sum of the signed curvatures at every gluing point is non-positive. A generalization to subsets of smooth Riemannian surfaces of curvature $k\leq 0$ is given.
\newline
\textit{{2020 Mathematics Subject Classification:}
53C20;53C45,53C23}

\end{abstract}

\section{Introduction and Statements of Results\label{sec1}}

It is  standard in the theory of $CAT(k)$ spaces that a convex subset of a
$CAT(k)$ space is, with the induced length metric, again a $CAT(\kappa)$ space
and the gluing of two $CAT(k)$ spaces along a common convex subspace is again
$CAT(\kappa).$ We refer the reader to \cite{BrHa} for definitions and terminology concerning $CAT$ spaces.

In the context of Riemannian manifolds of dimension $n\geq 3,$ under certain assumptions on the sectional curvatures of the boundary, it is shown in \cite{Kov} that attaching Riemannian manifolds of curvature $\leq \kappa$ along some isometry of their boundaries results in a space of curvature $\leq \kappa .$

In this work  we address these matters in dimension $2$ without the convexity
assumption. More precisely, we describe a class of subspaces of $\mathbb{R}
^{2},$ not necessarily convex, which are locally $CAT(0)$ spaces and we impose
conditions under which the gluing of two such subsets along a piece of their
boundary is again a locally $CAT(0)$ space.

We begin with $\mathbb{R}^{2}$ being the ambient space and in Section 4 below we generalize all results in the case of gluing subsets of smooth Riemannian surfaces of curvature $k\leq 0 .$

Let $\Sigma$ be a $2-$dimensional connected complete sub-manifold of $\mathbb{R}^{2}$
whose boundary $\partial \Sigma$ consists of finitely many components each
being a piece-wise smooth curve in $\mathbb{R}^{2}.$ Equip $\Sigma$ with the induced from $\mathbb{R}^{2}$ length metric. The topology with respect to the induced length metric is equivalent to the relative topology from $\mathbb{R}^{2}. $ This follows from the fact that any two points in $\partial\Sigma$ have finite distance.  It follows that $\Sigma$ with the induced length metric is
complete and locally compact, hence, a geodesic metric space. We will be
calling such a space a \emph{domain} in $\mathbb{R}^{2}.~$

Let $X$ be an open simply connected subset of $\mathbb{R}^{2}$ equipped with the induced length metric. Let $\overline{X}$ be the enlargement of $X$ with all boundary points of finite distance from some, hence any, point in $X$. Then $\overline{X}$  is a $CAT(0)$ space. This is shown in a more general context in \cite{Bis, LyWe}. From this we immediately have the following

\begin{proposition}
\label{cat0subsets}A domain $\Sigma$ in $\mathbb{R}^{2}$ is a locally $CAT(0)$ space.
\end{proposition}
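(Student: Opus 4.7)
\emph{Proof proposal.} The plan is to reduce the claim directly to the Bishop / Lytchak--Wenger theorem cited just above the proposition: if $X \subset \mathbb{R}^2$ is open and simply connected, then the enlargement $\overline{X}$ by boundary points of finite distance, equipped with the induced length metric, is $CAT(0)$. So the task is to show that every $p \in \Sigma$ admits a metric ball $B_{\Sigma}(p,r)$ that is isometric to such an enlargement $\overline{X}$.

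For $p$ in the topological interior of $\Sigma$ in $\mathbb{R}^2$ the argument is immediate: for sufficiently small $r$, the Euclidean ball $B_{\mathbb{R}^2}(p,r)$ lies in $\Sigma \setminus \partial \Sigma$, and on this convex Euclidean region the induced length metric of $\Sigma$ coincides with the Euclidean metric. Such a flat disk is $CAT(0)$.

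For $p \in \partial \Sigma$ I would choose $r>0$ small enough that $B_{\Sigma}(p,r)$ (i) meets only the boundary component containing $p$, and (ii) contains no non-smooth point of $\partial\Sigma$ other than possibly $p$ itself. Since $\partial\Sigma$ is piecewise smooth and $\Sigma$ is a $2$-manifold, a neighborhood of $p$ in $\Sigma$ is homeomorphic either to a half-disk (if $p$ is a smooth boundary point) or to a planar wedge-shaped region (if $p$ is a corner). In either case, setting $X := B_{\Sigma}(p,r) \setminus \partial\Sigma$ yields an open simply connected subset of $\mathbb{R}^2$ whose enlargement by finite-distance boundary points is precisely $B_{\Sigma}(p,r)$. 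Proposition~\ref{cat0subsets} then follows from the cited theorem.

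The only delicate step I anticipate is verifying that the induced length metric on $\overline{X}$ agrees with the restriction of the length metric of $\Sigma$ to $B_{\Sigma}(p,r)$; equivalently, that short length-minimizing paths in $\Sigma$ between points near $p$ do not exit $B_{\Sigma}(p,r)$ and re-enter. This is where completeness and local properness of $\Sigma$ are used: by shrinking $r$, any path leaving $B_{\Sigma}(p,r)$ and returning to $B_{\Sigma}(p,r/2)$ has length at least $r$ and can be replaced by a shorter path staying inside. Once this is in hand, the identification of $B_{\Sigma}(p,r)$ with $\overline{X}$ is an isometry of length spaces, and the Bishop / Lytchak--Wenger theorem finishes the proof.
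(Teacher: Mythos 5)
Your proposal is correct and follows essentially the same route as the paper, which deduces the proposition directly from the cited Bishop / Lytchak--Wenger result that the finite-distance enlargement $\overline{X}$ of an open simply connected planar set with its induced length metric is $CAT(0)$. The paper states this deduction as immediate, whereas you spell out the localization at interior and boundary points and the agreement of the length metrics on small balls, which is a reasonable filling-in of the same argument.
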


Let $\Sigma_{A},\Sigma_{B}$ be domains in $\mathbb{R}^{2}$ as described above.
Let $I_{A}$ (resp. $I_{B}$) be a closed (finite) subinterval of $\partial
\Sigma_{A}$ (resp. $\partial \Sigma_{B}$). Assume that $I_{A},I_{B}$ are
isometric and, as curves, are both parametrized by arc-length by the same real
interval $J$
\[
\sigma_{A}:J\rightarrow I_{A}\mathrm{\ and\ }\sigma_{B}:J\rightarrow I_{B}.
\]
Recall that the signed curvature of a curve which is the boundary of a domain is the curvature defined with respect to the 2-frame consisting of the tangent vector and the unit normal chosen to be directed towards the interior of the domain.

For each $s\in J$ denote by $\kappa_{A}\left(  s\right)  $ (resp. $\kappa
_{B}\left(  s\right)  $) the signed curvature of $I_{A}$ (resp. $I_{B}$) at
the point $\sigma_{A}\left(  s\right)  $ (resp. $\sigma_{B}\left(  s\right)
$). Assume the following properties hold for
all $s\in J:$

\begin{description}
\item[(k1)] $\kappa_{A}\left(  s\right)  \leq 0$ for all $s\in J.$

\item[(k2)] $\kappa_{B}\left(  s\right) \geq 0$ for all $s\in J.$

\item[(k3)] $\kappa_{A}\left(  s\right)  +\kappa_{B}\left(  s\right)  \leq 0$ where equality holds for finitely many points in $J.$
\end{description}

As $I_{A}$ and $I_{B}$ are isometric we may glue $\Sigma_{A}$ with $\Sigma_{B}$ along their isometric
boundary pieces $I_{A}\equiv I_{B}$ to form a connected surface $\Sigma$
\[
\Sigma:= \Sigma_{A}\cup_{I_{A}\equiv I_{B}}\Sigma_{B}
\]
with piece-wise smooth boundary $\partial \Sigma=\left(  \partial \Sigma
_{A}\backslash \mathrm{Int}\left(  I_{A}\right)  \right)  \cup \left(
\partial \Sigma_{B}\backslash \mathrm{Int}\left(  I_{B}\right)  \right)  $

As $\Sigma$ is a locally compact, complete length space, $\Sigma \ $is a
geodesic metric space. We will show in Section \ref{sec3} below the following

\begin{theorem}
\label{main}The surface $\Sigma$ is a locally $CAT(0)$ metric space.
\end{theorem}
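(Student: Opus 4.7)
The plan is to verify the local $CAT(0)$ condition at every point $p\in \Sigma$. For $p$ not on the gluing interface $I_A\equiv I_B$, a sufficiently small metric ball around $p$ lies entirely inside $\Sigma_A$ or $\Sigma_B$, and the local $CAT(0)$ property is immediate from Proposition~\ref{cat0subsets}. The substantive task is thus at a point $p$ in the interior of $I_A\equiv I_B$; the endpoints of the interface (if any) would be treated by a parallel argument, with appropriate attention to the corner of $\partial\Sigma$ located there.

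For such a point $p$, my plan is a polyhedral approximation argument. Fix a small radius $r>0$ and, for each $n$, choose a partition $s_0<s_1<\cdots<s_n$ of the arclength parameter along $I_A\cap B(p,r)$ with spacing $\ell_n\sim \ell/n$, arranged so that no partition point coincides with one of the finitely many zeros of $\kappa_A+\kappa_B$ supplied by (k3). Replace the smooth arc of $I_A$ between successive partition points by its straight chord, and do the same for $I_B$; this yields flat polygonal domains $\Sigma_A^{(n)}, \Sigma_B^{(n)}$ whose polygonal boundary pieces are isometric, and gluing them produces a polyhedral surface $\Sigma^{(n)}$ that is flat away from its vertices $q_i^{(n)}$. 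Thus local $CAT(0)$ for $\Sigma^{(n)}$ reduces to a cone-angle condition at each $q_i^{(n)}$.

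A direct first-order computation identifies the cone angle at $q_i^{(n)}$ in $\Sigma^{(n)}$ as
\[
2\pi - \bigl(\kappa_A(s_i)+\kappa_B(s_i)\bigr)\,\ell_n + O(\ell_n^{2}).
\]
By (k3) and the choice of partition, $\kappa_A(s_i)+\kappa_B(s_i)<0$ strictly, so for all $n$ sufficiently large the cone angle at every $q_i^{(n)}$ exceeds $2\pi$. By the standard cone-angle criterion for $2$-dimensional polyhedral complexes, each $\Sigma^{(n)}$ is then locally $CAT(0)$ in a fixed neighborhood of $p$.

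One concludes by letting $n\to\infty$. The polyhedral metrics on $\Sigma^{(n)}$ should converge uniformly to the metric of $\Sigma$ on any fixed compact neighborhood of $p$, with $1+O(\ell_n)$ metric distortion, and local $CAT(0)$ is preserved under such uniform metric limits. I anticipate that the main obstacle will be obtaining uniform $CAT(0)$ radii as $n\to\infty$ together with transferring local $CAT(0)$ to the finitely many exceptional points of $J$ at which $\kappa_A+\kappa_B$ vanishes: these points are marginal for the first-order argument above, but they should be handled by a density argument, using that they are limits of non-exceptional points where local $CAT(0)$ has already been established and that the four-point $CAT(0)$ inequality is closed under metric convergence.
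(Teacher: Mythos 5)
Your overall strategy --- chordal/polygonal approximation of the interface, a cone-angle condition $>2\pi$ at the vertices driven by (k3), and a passage to the limit --- is exactly the strategy of the paper's proof (its Case 1, with Proposition \ref{basicLemma} playing the role of your assertion that local $CAT(0)$ is preserved under uniform metric limits). However, as written the proposal has genuine gaps. First, if you chord $I_A$ and $I_B$ at the \emph{same} arclength partition points $s_0<\dots<s_n$, the two polygonal lines are \emph{not} isometric: the chord subtending an arc of length $\ell$ has length $\ell-\kappa^2\ell^3/24+O(\ell^4)$, and $|\kappa_A|\neq\kappa_B$ in general, so the gluing of $\Sigma_A^{(n)}$ to $\Sigma_B^{(n)}$ along ``isometric polygonal boundary pieces'' is not defined. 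The paper avoids this by choosing the vertices on $I_A$ and on $I_B$ separately so that all chords have one common length $\ell_k$; the price is that corresponding vertices no longer sit at the same parameter, which creates a (handled) complication about the $B$-polygon overshooting the neighborhood.

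Second, and more seriously, the claim that for large $n$ every cone angle exceeds $2\pi$ is unjustified near the zeros of $\kappa_A+\kappa_B$: a partition point $s_i$ may lie arbitrarily close to such a zero, so the first-order excess $-\bigl(\kappa_A(s_i)+\kappa_B(s_i)\bigr)\ell_n$ can be dominated by the $O(\ell_n^2)$ error. The argument therefore only works on a neighborhood where $\kappa_A+\kappa_B\leq-\varepsilon<0$, which is impossible when the center $p$ is itself a zero of $\kappa_A+\kappa_B$. Your proposed ``density argument'' does not close this: the non-exceptional points near $p$ only furnish $CAT(0)$ balls whose radii shrink to $0$ as they approach $p$, and closedness of the four-point inequality under metric convergence says nothing about geodesic triangles passing through $p$ itself. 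The paper's Case 2 handles this by splitting such a triangle at $p$ and comparing with triangles $T(x,w_k,z)$, $w_k\to p$, $w_k\neq p$; crucially this step, as well as the verification that the approximating metrics actually converge to that of $\Sigma$ (which you also only assert), rests on the local uniqueness of geodesics in $\Sigma$ (Proposition \ref{ugl}, via the angle identity of Lemma \ref{piangle}). That uniqueness is not free --- the example at the end of Section \ref{sec3} shows it fails when (k3) fails --- and your proposal never establishes it.
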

\begin{remark}
 If instead of the assumption \textbf{\rmfamily{(k2)}} the curvature $\kappa_{B}$ satisfies $\kappa_{B}\left(  s\right) \leq 0$ then both $\Sigma_A , \Sigma_B$ with their induced length metric have the property $I_A$ and $I_B$ are geodesic boundaries of $\Sigma_A$ and $\Sigma_B$ respectively. Therefore, the conclusion of the above Theorem is a standard result.
\end{remark}
The necessity of assumption \textbf{(k3)} is exhibited by an example given at the end of Section \ref{sec3} below.
%%%%%%%%%%%%%%%%%%%%%%%%%%%%%%%%%%%%%%%%%%%%%%%%%%%%%%%%%%

\section{Limits of $CAT(0)$ spaces\label{secLimits}}
In this section we will prove Proposition \ref{basicLemma} below which roughly says that the limit of $CAT(0)$ spaces is a $CAT(0)$ space. This Proposition will be used in the proof of Theorem \ref{main} in Section \ref{sec3} as well as in the proof of Theorem \ref{mainriemann} in Section \ref{sec4}. However, we state it separately as it can be interesting in its own right.

%\st{Let $X$ be a set and $A, A_{k},k\in \mathbb{N}$ subsets of $X. $}

\begin{proposition}
\label{basicLemma}Let $ A, A_{k},k\in \mathbb{N}$ be locally compact geodesic metric spaces with the following properties

\begin{itemize}
\item[(a1)] Each $A_{k}$ is a $CAT(0)$ space.

\item[(a2)] For every $x\in A$ there exists $K=K(x)\in \mathbb{N}$ such
that $x\in A_{k}$ for all $k\geq K.$

\item[(a3)] For every $x,y\in A,$ $\lim_{k\rightarrow \infty}\left \vert x-y\right \vert _{k}=\left \vert x-y\right \vert .$
\end{itemize}
where $\left \vert \quad \right \vert $ denotes the metric in $A$ and $\left \vert
\quad \right \vert _{k}$ the metric in $A_{k}.$ \\
Then
$A$ is a $CAT(0)$ space.
\end{proposition}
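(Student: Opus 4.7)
The plan is to verify the CN (Bruhat--Tits) inequality in $A$ by passing to the limit in the corresponding inequalities that hold in each $A_{k}$. Recall that a geodesic metric space is $CAT(0)$ if and only if for every triple $p,q,r$ and every midpoint $m$ of $q,r$ one has
\[
|p-m|^{2}\leq \tfrac{1}{2}|p-q|^{2}+\tfrac{1}{2}|p-r|^{2}-\tfrac{1}{4}|q-r|^{2}.
\]
Since $A$ is already a geodesic space by hypothesis, it is enough to establish this inequality for arbitrary $p,q,r\in A$ and an arbitrary midpoint $m$ of $q,r$ in $A$.

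Fix such $p,q,r,m\in A$, so $|q-m|=|m-r|=|q-r|/2$. By (a2) there exists $K$ with $p,q,r,m\in A_{k}$ for all $k\geq K$; fix such $k$. Inside $A_{k}$, which is $CAT(0)$ by (a1), the segment from $q$ to $r$ has a unique midpoint $m_{k}$. The first key step is to show that $|m-m_{k}|_{k}\to 0$. For this, apply the CN inequality available in $A_{k}$ with apex $m$ and midpoint $m_{k}$ of $q,r$:
\[
|m-m_{k}|_{k}^{2}\leq \tfrac{1}{2}|m-q|_{k}^{2}+\tfrac{1}{2}|m-r|_{k}^{2}-\tfrac{1}{4}|q-r|_{k}^{2}.
\]
By (a3) the right--hand side converges to $\tfrac{1}{2}(|q-r|/2)^{2}+\tfrac{1}{2}(|q-r|/2)^{2}-\tfrac{1}{4}|q-r|^{2}=0$, giving the desired convergence $|m-m_{k}|_{k}\to 0$.

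For the final step, apply CN in $A_{k}$ this time with apex $p$:
\[
|p-m_{k}|_{k}^{2}\leq \tfrac{1}{2}|p-q|_{k}^{2}+\tfrac{1}{2}|p-r|_{k}^{2}-\tfrac{1}{4}|q-r|_{k}^{2}.
\]
The right--hand side tends to $\tfrac{1}{2}|p-q|^{2}+\tfrac{1}{2}|p-r|^{2}-\tfrac{1}{4}|q-r|^{2}$ by (a3). For the left--hand side, the triangle inequality in $A_{k}$ and the preceding step yield $|p-m|_{k}\leq |p-m_{k}|_{k}+|m-m_{k}|_{k}= |p-m_{k}|_{k}+o(1)$, and $|p-m|_{k}\to |p-m|$ again by (a3). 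Combining, $|p-m|\leq \liminf_{k}|p-m_{k}|_{k}$ and therefore $|p-m|^{2}\leq \tfrac{1}{2}|p-q|^{2}+\tfrac{1}{2}|p-r|^{2}-\tfrac{1}{4}|q-r|^{2}$, as required.

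The main subtlety is that $A$ and the $A_{k}$ are a priori distinct metric spaces, so a midpoint $m$ chosen in $A$ is not automatically the midpoint of $q,r$ inside $A_{k}$; without bridging them the two CN inequalities do not line up. The trick is the first step above: $m$ is an \emph{almost} midpoint for the $A_{k}$--metric, and CN applied in the $CAT(0)$ space $A_{k}$ forces almost midpoints to be close to the genuine midpoint $m_{k}$. After that observation, everything is routine limit--taking using hypothesis (a3).
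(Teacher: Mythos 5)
Your proof is correct, and it takes a genuinely different route from the paper's. You verify the Bruhat--Tits CN inequality
\[
|p-m|^{2}\leq \tfrac{1}{2}|p-q|^{2}+\tfrac{1}{2}|p-r|^{2}-\tfrac{1}{4}|q-r|^{2}
\]
for metric midpoints, invoking the standard fact (Bridson--Haefliger, Ch.\ II.1, Exercise 1.9(1c)) that this characterizes $CAT(0)$ among geodesic spaces; you should cite that equivalence explicitly, since it is the load-bearing reduction, but it is indeed standard and needs only that $A$ is geodesic, which is a hypothesis. The paper instead first proves $A$ is uniquely geodesic and then verifies the comparison inequality directly: it transports arbitrary points $a,b$ on two sides of a triangle in $A$ to points $a_k,b_k$ on the corresponding triangle in $A_k$ and proves three convergence claims, the delicate one being that $|a_k-a|_k\to 0$, which requires a comparison-triangle argument with accumulation on two circles. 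Your first step plays exactly the role of that delicate claim --- showing that a point of $A$ is $A_k$-close to its counterpart in $A_k$ --- but the CN inequality makes it quantitative and essentially free: $m$ is an almost-midpoint for $|\cdot|_k$, and CN applied in $A_k$ with apex $m$ forces $|m-m_k|_k^2$ below a quantity that tends to $0$ by (a3). The remaining limit-taking ($|p-m|\leq\liminf_k|p-m_k|_k$ via the triangle inequality, then squaring and passing to the limit in the CN inequality with apex $p$) is sound. What your approach buys is brevity and the elimination of all comparison-triangle bookkeeping; what the paper's approach buys is self-containedness (it uses only the definition of $CAT(0)$) plus the unique-geodesicity of $A$ as an explicit intermediate output, which in your argument is only obtained a posteriori as a consequence of $A$ being $CAT(0)$.
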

%%%%%%%%%%%%%%
\begin{proof}
We first show that A is uniquely geodesic. Assume, on the contrary, that there exist points $x,y\in A$ with two geodesic segments $\left[  x,y\right]_1,\left[x,y\right]_2$ joining them. Let $z_1$ (resp. $z_2$) be the midpoint of  $\left[  x,y\right]_1$ (resp.$\left[  x,y\right]_2$). We may assume that $z_1 \neq z_2$ (if equal, replace one of $x,y$ with $z_1$).

For all $k$ with $k\geq \max \left\{ K(x), K(y), K(z_1)\right\}$, denote by $\left[  x,y\right] _k $ the unique geodesic segment in $A_k$ with endpoints $x,y.$ Let $m_k$ be the midpoint of $\left[  x,y\right] _k .$ Then by (a3) we have\\
%\begin{minipage}{15cm}
\begin{equation}
\begin{array}{l}
\lim_{k\rightarrow \infty}\left \vert x-z_1\right \vert _{k}
=
\lim_{k\rightarrow \infty}\left \vert y-z_1\right \vert _{k}
= \displaystyle
\frac{\left \vert x-y\right \vert}{2}  \\
\lim_{k\rightarrow \infty}\left \vert x-m_k\right \vert _{k}
=\displaystyle
\lim_{k\rightarrow \infty}\left \vert y-m_k\right \vert _{k}
=\lim_{k\rightarrow \infty}\frac{\left \vert x-y\right \vert _k}{2}
=\frac{\left \vert x-y\right \vert}{2}.
\end{array}
\label{limits4}
\end{equation}
%\end{minipage}
For the geodesic triangle $T_k\left(  x,z_1,y\right)  $  in $A_k$ with vertices $x,z_1 ,y$ consider the corresponding comparison triangle  $\overline{T_k}$  in
$\mathbb{R}^{2}$ with vertices $\overline{x}_k,\overline{z_1}_k,\overline{y}_k.$ By (\ref{limits4}) the sides of $\overline{T_k}$ satisfy
\[
\lim_{k\rightarrow \infty}\left \Vert \overline{x}_k - \overline{y}_k\right \Vert = \left \vert x-y\right \vert
\mathrm{\ \ and\ \ }
\lim_{k\rightarrow \infty}\left \Vert \overline{x}_k
- \overline{z_1}_k \right \Vert
=
\lim_{k\rightarrow \infty}\left \Vert
\overline{y}_k - \overline{z_1}_k \right \Vert
= \frac{\left \vert x-y\right \vert}{2}.
\]
where $\left \Vert \cdot \right \Vert $ denotes the Euclidean distance.\\
It follows that $\left \Vert \overline{m}_k - \overline{z_1}_k \right \Vert \rightarrow 0$ and, thus,
$\left \vert z_1 -m_k\right \vert \rightarrow 0. $ Similarly we show $\left \vert z_2 -m_k\right \vert \rightarrow 0$ which implies that $\left \vert z_1 -z_2\right \vert \rightarrow 0$ a contradiction because $z_1 \neq z_2 .$
%%%%%%%%%%%%%%%%%%%%%%%%%%%%%%%%%%%%%%%%%%%%%%%%%%
\begin{center}
\begin{figure}[h]
\hspace*{27mm}\includegraphics[scale=0.73]{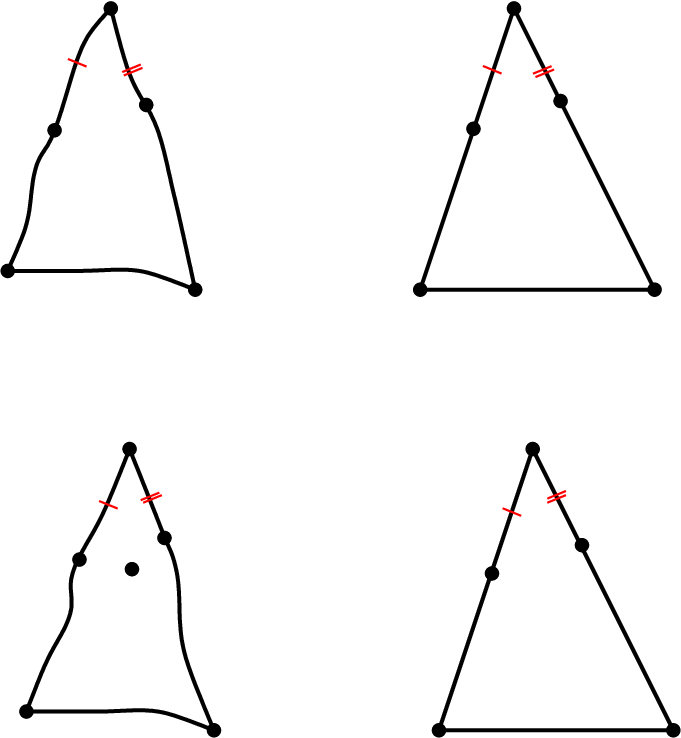}
%\\vspace*{-8mm}
\newline \begin{picture}(22,12)
\put(110,279){$x$}     \put(85,232){$a$}
\put(70,175){$y$}       \put(133,239){$b$}
\put(148,168){$z$}
\put(250,279){$\overline{x}$}     \put(232,232){$\overline{a}$}
\put(213,172){$\overline{y}$}     \put(280,237){$\overline{b}$}
\put(311,172){$\overline{z}$}
\put(144,195){$T\subset A$} \put(300,195){$\overline{T}\subset \mathbb{R}^2$}
\put(117,125){$x$}     \put(90,82){$a_k$}
\put(75,25){$y$}       \put(140,87){$b_k$}
\put(155,16){$z$}       \put(119,68){$a$}
\put(254,125){$\overline{x_k}$}     \put(234,78){$\overline{a_k}$}
\put(215,20){$\overline{y_k}$}     \put(286,86){$\overline{b_k}$}
\put(318,18){$\overline{z_k}$}
\put(145,45){$T_k \subset A_k$}\put(305,43){$\overline{T_k}
\subset \mathbb{R}^2$}
\end{picture}
\caption{The comparison triangles $\overline{T}$ and $\overline{T_{k}}$ for
the geodesic triangles $T\subset A$ and $T_{k} \subset A_{k} $ respectively.}%
\label{comparison1}%
\end{figure}
\end{center}
%%%%%%%%%%%%%%%%%%%%%%%%%%%%%%%%%%%%%%%%%%%%%%%%%%%%%%
We proceed to show
that $CAT(0)$ inequality holds for any three points $x,y,z\in A.$ Denote by
$T\left(  x,y,z\right)  $ the geodesic triangle in $A$ with vertices $x,y,z.$
As usual, $\overline{T}$ denotes the corresponding comparison triangle in
$\mathbb{R}^{2}$ with vertices $\overline{x},\overline{y},\overline{z}.$ For
arbitrary points $a\in \left[  x,y\right]  ,b\in \left[  x,z\right]  $ we will
show
\begin{equation}
\left \vert a-b\right \vert \leq \left \Vert \overline{a}-\overline{b}\right \Vert .
\label{ccat}%
\end{equation}
Let $T_{k},$ for all $k$ sufficiently large, be the geodesic triangle in
$A_{k}$ with vertices $x,y,z,$ that is, $T_{k}=\left[  x,y\right]  _{k}%
\cup \left[  y,z\right]  _{k}\cup \left[  z,x\right]  _{k}.$ Set $a_{k}$ to be
the point in $\left[  x,y\right]  _{k}$ with $\left \vert x-a_{k}\right \vert
_{k}=\left \vert x-a\right \vert $ and $b_{k}$ the point in $\left[  x,z\right]
_{k}$ with $\left \vert x-b_{k}\right \vert _{k}=\left \vert x-b\right \vert .$
The corresponding comparison triangle $\overline{T_{k}}$ in $\mathbb{R}^{2}$
for $T_{k}$ has vertices $\overline{x_{k}},\overline{y_{k}},\overline{z_{k}}$
and the points corresponding to $a_{k},b_{k}$ are $\overline{a_{k}}\in \left[
\overline{x_{k}},\overline{y_{k}}\right]  $ and $\overline{b_{k}}\in \left[
\overline{x_{k}},\overline{z_{k}}\right]  .$ All the above notation is given, for convenience, in Figure \ref{comparison1}. We will need the following three Claims

\underline{CLAIM 1}: $\left \Vert \overline{a_{k}}-\overline{b_{k}}\right \Vert
\rightarrow \left \Vert \overline{a}-\overline{b}\right \Vert $ as $k\rightarrow
\infty.$

\underline{CLAIM 2}: $\left \vert a_{k}-a\right \vert _{k}\rightarrow0$ and
$\left \vert b_{k}-b\right \vert _{k}\rightarrow0$ as $k\rightarrow \infty.$

\underline{CLAIM 3}: $\left \vert a_{k}-b_{k}\right \vert _{k}\rightarrow
\left \vert a-b\right \vert $ as $k\rightarrow \infty.$

Assuming the above Claims the proof of (\ref{ccat}) follows easily: assume, on
the contrary, that $\left \vert a-b\right \vert >\left \Vert \overline
{a}-\overline{b}\right \Vert .$ Then by Claims 1 and 3 and for large enough
$k_{0},$ we have $\left \vert a_{k_{0}}-b_{k_{0}}\right \vert _{k_{0}}>$
$\left \Vert \overline{a_{k_{0}}}-\overline{b_{k_{0}}}\right \Vert $ which
contradicts the fact that $A_{k_{0}}$ is a $CAT(0)$ space.
\begin{center}
\begin{figure}[ptb]
\hspace*{27mm}\includegraphics[scale=0.93]{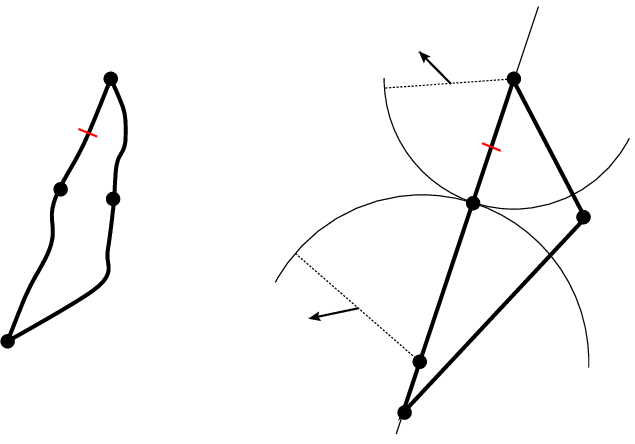}
%\\vspace*{-8mm}
\newline \begin{picture}(22,12)
\put(120,188){$x$} \put(237,197){radius $\left \vert x-a\right \vert$}
\put(133,128){$a$}     \put(88,135){$a_k$}
\put(71,55){$y$}
\put(263,25){$\left(  \overline{y}\right)  _{k}$}
\put(343,116){$\left(  \overline{a}\right)  _{k}$}
\put(250,52){$y_0$}
\put(312,180){$\left(  \overline{x}\right)  _{k}$}
\put(320,216){$\Lambda$}
\put(165,62){radius $\left \vert y-a\right \vert$}
\put(278,133){$\overline{a_k}$}
\end{picture}
\caption{The geodesic triangle $\bigtriangleup_{k}$ in $A_{k}$ with vertices
$x,y,a$ and its comparison triangle $\overline{\bigtriangleup}_{k}$.}%
\label{comparison2}%
\end{figure}
\end{center}
The proof of Claim 1 is straightforward: by property (a3), $\left \vert
x-y\right \vert _{k}\rightarrow \left \vert x-y\right \vert $ and, thus,
\[
\left \Vert \overline{x_{k}}-\overline{y_{k}}\right \Vert \rightarrow \left \Vert
\overline{x}-\overline{y}\right \Vert
\]
and similarly,
\[
\left \Vert \overline{x_{k}}-\overline{z_{k}}\right \Vert \rightarrow \left \Vert
\overline{x}-\overline{z}\right \Vert \mathrm{\ and\ }\left \Vert \overline
{z_{k}}-\overline{y_{k}}\right \Vert \rightarrow \left \Vert \overline
{z}-\overline{y}\right \Vert .
\]
In other words, the lengths of the sides of the triangles $\overline{T_{k}}$
converge to the corresponding lengths of the sides of the triangle
$\overline{T}.$ Since $a_{k},b_{k}$ were chosen so that%
\[
\left.
\begin{array}
[c]{c}%
\left \vert x-a_{k}\right \vert _{k}=\left \vert x-a\right \vert \\
\mathrm{and}\\
\left \vert x-b_{k}\right \vert _{k}=\left \vert x-b\right \vert
\end{array}
\right \}  \Longrightarrow%
\begin{array}
[c]{c}%
\left \Vert \overline{x_{k}}-\overline{a_{k}}\right \Vert =\left \Vert
\overline{x}-\overline{a}\right \Vert \\
\mathrm{and}\\
\left \Vert \overline{x_{k}}-\overline{b_{k}}\right \Vert =\left \Vert
\overline{x}-\overline{b}\right \Vert
\end{array}
\]
and a Euclidean triangle is determined by the lengths of its sides, it
follows that $\left \Vert \overline{a_{k}}-\overline{b_{k}}\right \Vert
\rightarrow \left \Vert \overline{a}-\overline{b}\right \Vert .$

The proof of Claim 3 follows immediately from property (a3), Claim 2 and the
following inequality
\[
\Bigl \vert \left \vert a_{k}-b_{k}\right \vert _{k}-\left \vert a-b\right \vert
_{k}\Bigr \vert \leq \left \vert a_{k}-a\right \vert _{k}+\left \vert
b_{k}-b\right \vert _{k}\rightarrow0.
\]
We conclude the proof of the Proposition by showing Claim 2. For each $k$ large
enough, consider the geodesic triangle $\bigtriangleup_{k}$ in $A_{k}$ with
vertices $x,y,a$ and its comparison triangle $\overline{\bigtriangleup}_{k}$
in $\mathbb{R}^{2}$ with vertices $\left(  \overline{x}\right)  _{k},\left(
\overline{y}\right)  _{k},\left(  \overline{a}\right)  _{k}.$ The point
$a_{k}\in$ $\left[  x,y\right]  _{k}$ was chosen so that $\left \vert
x-a_{k}\right \vert _{k}=\left \vert x-a\right \vert $ hence $\left \Vert \left(
\overline{x}\right)  _{k}-\overline{a_{k}}\right \Vert $ is constant for all
$k.$ In particular, we may choose all comparison triangles $\overline
{\bigtriangleup}_{k}\subset \mathbb{R}^{2}$ to have the segment $\left[
\left(  \overline{x}\right)  _{k},\overline{a_{k}}\right]  $ in common.
Moreover, the vertices $\left(  \overline{y}\right)  _{k}$ belong to the
(Euclidean) line $\Lambda$ containing $\left[  \left(  \overline{x}\right)
_{k},\overline{a_{k}}\right]  $ for all $k, $ see Figure \ref{comparison2}.

By $CAT(0)$ inequality $\left \vert a_{k}-a\right \vert _{k}\leq \left \Vert
\overline{a_{k}}-\left(  \overline{a}\right)  _{k}\right \Vert $ so it suffices
to show that $\left \Vert \overline{a_{k}}-\left(  \overline{a}\right)
_{k}\right \Vert \rightarrow0.$

Since, by property (a3), $\left \vert x-a\right \vert _{k}\rightarrow \left \vert
x-a\right \vert ,$ it follows
\begin{equation}
\left \Vert \left(  \overline{x}\right)  _{k}-\left(  \overline{a}\right)
_{k}\right \Vert \rightarrow \left \vert x-a\right \vert . \label{ccat1}%
\end{equation}
Moreover,%
\begin{equation}
\left \vert y-a_{k}\right \vert _{k}\rightarrow \left \vert y-a\right \vert
\label{ccat4}%
\end{equation}
because
\[%
\begin{split}
\left \vert y-a_{k}\right \vert _{k}=\left \vert x-y\right \vert _{k}- \left \vert
x-a_{k}\right \vert _{k} = \left \vert x-y\right \vert _{k}-\left \vert
x-a\right \vert \xrightarrow{\text{\ Claim\ 1\ } } \left \vert x-y\right \vert -
\left \vert x-a\right \vert =\\
=\left \vert y-a\right \vert .
\end{split}
\]
Now (\ref{ccat4}) implies that
\begin{equation}
\left \Vert \left(  \overline{y}\right)  _{k}-\overline{a_{k}}\right \Vert
\rightarrow \left \vert y-a\right \vert . \label{ccat2}%
\end{equation}
As $\left \{  \left(  \overline{y}\right)  _{k}\right \}  \subset \Lambda$ there
must exist a unique $y_{0}\in \Lambda$ such that
\begin{equation}
\left \Vert \left(  \overline{y}\right)  _{k}-y_{0}\right \Vert \rightarrow
0\mathrm{\ and\ }\left \Vert y_{0}-\overline{a_{k}}\right \Vert =\left \vert
y-a\right \vert . \label{ccat3}%
\end{equation}
Again by Claim 1,
\begin{equation}
\left \Vert \left(  \overline{y}\right)  _{k}-\left(  \overline{a}\right)
_{k}\right \Vert =\left \vert y-a\right \vert _{k}\rightarrow \left \vert
y-a\right \vert . \label{ccat5}%
\end{equation}
By (\ref{ccat3}) and (\ref{ccat5}), we have
\begin{equation}
\left \Vert y_{0}-\left(  \overline{a}\right)  _{k}\right \Vert \rightarrow
\left \vert y-a\right \vert . \label{ccat6}%
\end{equation}
Properties (\ref{ccat1}) and (\ref{ccat6}) assert that the points $\left \{
\left(  \overline{a}\right)  _{k}\right \}  $ must accumulate on the circle
centered at the (constant for all $k$) point $\left(  \overline{x}\right)
_{k}$ with radius $\left \vert x-a\right \vert $ as well as on the circle
centered at $y_{0}$ with radius $\left \vert y-a\right \vert .$ As the
(Euclidean) segment $\left[  \left(  \overline{x}\right)  _{k},y_{0}\right]  $
has length
\[
\left \Vert \left(  \overline{x}\right)  _{k}-\overline{a_{k}}\right \Vert
+\left \Vert \overline{a_{k}}-y_{0}\right \Vert =\left \vert x-a\right \vert
+\left \vert a-y\right \vert
\]
it follows that $\left \Vert \overline{a_{k}}-\left(  \overline{a}\right)
_{k}\right \Vert \rightarrow0$ as desired. This completes the proof of the
Proposition.
\end{proof}
\begin{remark}
 The above Proposition holds true with identical proof in the case $A_k, k\in\mathbb{N}$ are locally compact $CAT(\kappa)$ spaces, for any $\kappa \leq 0,$ with the conclusion being that $A$ is a $CAT(\kappa)$ space.
\end{remark}
%%%%%%%%%%%%%%%%%%%%%%%%%%%%%%%%%%%%%%%%%%%%%%%%%%%%%%%%%%%%%%%
\section{Gluing Domains from $\mathbb{R}^{2}$\label{sec3}}

Let $\Sigma_{A},\Sigma_{B}$ be domains in $\mathbb{R}^{2}$ satisfying
properties (k1), (k2) and (k3) and let $\Sigma$ be the surface
\[
\Sigma:=\Sigma_{A} \cup_{I_{A}\equiv I_{B}}  \Sigma_{B} .
\]
as described in the Introduction. We denote by $I_{\Sigma}$ the curve in
$\Sigma$ corresponding to $I_{A}\equiv I_{B}.$ Note that $I_{\Sigma}$ is
properly embedded in $\Sigma,$ that is, $\partial I_{\Sigma}$ consists of two
points in the boundary of $\Sigma$ and $I_{\Sigma}\cap \partial \Sigma=\partial
I_{\Sigma}.$ Clearly $I_{\Sigma}$ is again parametrized by $J$ and we will denote this arc-length parametrization by
\[\sigma :J\rightarrow I_{\Sigma} \] so that for every $s\in J$ the above gluing gives $\sigma_A (s)\equiv \sigma_B (s) \equiv \sigma (s).$

Before the proof of Theorem \ref{main}, we show the following

\begin{proposition}
\label{ugl}Geodesic segments in $\Sigma$ are locally unique.
\end{proposition}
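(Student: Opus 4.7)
My plan is to split into cases by the location of $p$, dispose of the easy case $p\notin I_\Sigma$ using the locally $CAT(0)$ property of each piece (Proposition~\ref{cat0subsets}), and handle the main case $p\in I_\Sigma$ by a Gauss-Bonnet argument that exploits condition \textbf{(k3)}.

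If $p\in \Sigma\setminus I_\Sigma$, the closedness of $I_\Sigma$ in $\Sigma$ lets me choose $\varepsilon>0$ so that the closed $4\varepsilon$-ball about $p$ is disjoint from $I_\Sigma$. A standard length estimate then shows that every geodesic of $\Sigma$ joining two points of the $\varepsilon$-ball stays inside $\Sigma_A$ (say), and local uniqueness follows from Proposition~\ref{cat0subsets}.

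If $p\in I_\Sigma$, I pick a small neighborhood $U$ of $p$ and assume, toward a contradiction, that some $x,y\in U$ admit two distinct geodesic segments $\gamma_1,\gamma_2$. Splitting at any intermediate intersection point, we may assume $\gamma_1\cap\gamma_2=\{x,y\}$, so $\gamma_1\cup\gamma_2$ bounds a topological disk $D\subset\Sigma$. Since $\Sigma_A\setminus I_\Sigma$ and $\Sigma_B\setminus I_\Sigma$ carry the flat metric of $\mathbb{R}^2$, all curvature of $\Sigma$ is concentrated along $I_\Sigma$, with density $\kappa_A+\kappa_B$ per unit arc length (a quick computation in Fermi coordinates on either side of $I_\Sigma$). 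Applying Gauss-Bonnet to $D$ -- with no corner term at the transverse $I_\Sigma$-crossings of $\gamma_i$, since length-minimization forces equal entry and exit angles there -- gives
\[
\int_{D\cap I_\Sigma}(\kappa_A+\kappa_B)\,ds = \alpha_x+\alpha_y,
\]
where $\alpha_x,\alpha_y\ge 0$ are the interior angles of $D$ at the cusps $x,y$. By \textbf{(k3)} the left hand side is $\le 0$, and it vanishes only when $D\cap I_\Sigma$ lies in the finite zero set of $\kappa_A+\kappa_B$. Hence $\alpha_x=\alpha_y=0$ and $D$ lies, in its interior, on one side of $I_\Sigma$, say $D\subset\Sigma_A$; then $\gamma_1,\gamma_2$ are two geodesics of $\Sigma_A$ between $x,y$, which contradicts the local uniqueness furnished by Proposition~\ref{cat0subsets}.

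The main obstacle I expect is making the Gauss-Bonnet step rigorous on the non-smooth space $\Sigma$. Two sub-points need verification: (i) a geodesic of $\Sigma$ meeting $I_\Sigma$ transversally contributes no exterior angle at the crossing, which is precisely the equal-angle condition delivered by the first variation of length across $I_\Sigma$; (ii) no open sub-arc of $I_\Sigma$ is itself a geodesic of $\Sigma$, so that $D$ really has only the two corners at $x$ and $y$. For (ii), at any point where $\kappa_B>0$ the chord in $\Sigma_B$ between nearby points of $I_\Sigma$ is strictly shorter than the arc, so \textbf{(k2)} together with the strict part of \textbf{(k3)} rules out geodesic sub-arcs inside $I_\Sigma$. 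The endpoint case $p\in\partial I_\Sigma$ is handled by the same scheme, with small adjustments for the tangent cone at those two special points.
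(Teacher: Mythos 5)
Your reduction to an innermost geodesic bigon $D$, and the tension between the nonpositive concentrated curvature $(\kappa_A+\kappa_B)\,ds$ along $I_\Sigma$ and the nonnegative angle excess $\alpha_x+\alpha_y$ of a bigon, is a genuinely different packaging of what the paper does: the paper's identities $z_A\widehat{z}w+w_A\widehat{w}z=\int_{J_{zw}}\left\vert\kappa_A(s)\right\vert ds$ and $z_B\widehat{z}w+w_B\widehat{w}z=\int_{J_{zw}}\kappa_B(s)\,ds$, combined with Lemma \ref{piangle} and the Euclidean angle sums of the triangle $xzw$ (or the relevant quadrilaterals), are exactly the bookkeeping one gets by cutting $D$ along $I_\Sigma$, applying classical Gauss--Bonnet to each flat piece, and adding. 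Your version is more uniform, covering all crossing patterns in one stroke; the paper's version never has to invoke Gauss--Bonnet on the singular space $\Sigma$, at the price of a case analysis.

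The gap is that your central identity is asserted, not proved: a Fermi-coordinate computation identifies the curvature \emph{density} along $I_\Sigma$, but it does not yield the integral formula for a region whose boundary crosses, or runs along, the singular curve. Making it rigorous means either importing Reshetnyak's theory of surfaces of bounded integral curvature, or performing the decomposition of $D$ along $I_\Sigma$ by hand --- which is precisely the paper's Claim and its three cases, so the ``main obstacle'' you flag is the actual content of the proof, not a technicality. Two of your supporting points also need repair. First, your sub-claim (ii) is false as stated: the strict part of \textbf{(k3)} does not force $\kappa_B>0$, and if $\kappa_B\equiv 0$ on an open subinterval (so $\kappa_A<0$ there off a finite set), the corresponding sub-arc of $I_\Sigma$ is a straight segment on $\partial\Sigma_B$ while the induced distance in $\Sigma_A$ between nearby points of it equals arclength (the boundary is concave from $\Sigma_A$), so that sub-arc \emph{is} a geodesic of $\Sigma$. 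The bigon can therefore carry boundary arcs along $I_\Sigma$; the sign of the inequality survives (such arcs contribute geodesic curvature $\int\kappa_A\,ds\le 0$ or $\int\kappa_B\,ds=0$ to $\partial D$), but this configuration, together with the corners where a geodesic enters and leaves $I_\Sigma$, must be included in the bookkeeping rather than excluded. Second, the ``no exterior angle at a transverse crossing'' statement is exactly Lemma \ref{piangle}, whose proof is a nontrivial comparison argument via Observations \ref{obs1} and \ref{obs2}; the first-variation heuristic does not apply off the shelf at the metric singularity, so this must be cited or reproved rather than treated as automatic.
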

In the proof of the above Proposition we will need the following property for the geodesic metric space $\Sigma .$
\begin{lemma}\label{piangle}
 Let $x\in \Sigma_A \setminus I_{A} , y \in \Sigma_B \setminus I_{B}  $ and $z_A\in I_{A}, z_B\in I_B$ be points such that
 \begin{itemize}
  \item $z_A$ is identified  with  $z_B$ under the identification $I_{A}\equiv I_B$ to give a point in $I_{\Sigma}\subset \Sigma$ denoted by $z$ and
  \item the union of the geodesic segments $[x,z]$ and $[z,y]$ is a geodesic at $z$ for the metric space $\Sigma .$
 \end{itemize}
 Then the angles $\theta_A$ (resp. $\theta_B$) formed by $[z_A,x]$ (resp. $[z_B,y]$) and the tangent of $\partial \Sigma_A$ (resp. $\partial \Sigma_B$) at $z_A$ (resp. $z_B$) satisfy\[ \theta_A +\theta_B = \pi. \]
\end{lemma}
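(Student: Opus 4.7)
The plan is to prove the statement by a first-variation-of-arc-length argument. Sliding the crossing point $z$ along $I_{\Sigma}$ produces a one-parameter family of competing paths from $x$ to $y$; the minimality of the geodesic $[x,z]\cup[z,y]$ forces the first derivative of the total length of these paths to vanish at $z$, and this vanishing translates directly into the required relation $\theta_{A}+\theta_{B}=\pi$.

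In more detail, since $z$ is an interior point of $I_{\Sigma}$ and $\partial \Sigma_{A}$ and $\partial \Sigma_{B}$ are smooth at $z_{A}$ and $z_{B}$ respectively, small neighbourhoods $U_{A}\subset\Sigma_{A}$ of $z_{A}$ and $U_{B}\subset\Sigma_{B}$ of $z_{B}$ are Euclidean half-disks. After replacing $x$ and $y$ by points on $[x,z]$ and $[z,y]$ sufficiently close to $z$ (a subsegment of a geodesic is a geodesic), I may assume that $[x,z]\subset U_{A}$ and $[z,y]\subset U_{B}$ are ordinary straight Euclidean segments meeting $\partial\Sigma_{A}$ (respectively $\partial\Sigma_{B}$) transversally at $z_{A}$ (respectively $z_{B}$) under angles $\theta_{A},\theta_{B}\in(0,\pi)$ with $\sigma_{A}^{\prime}(s)$ and $\sigma_{B}^{\prime}(s)$. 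For $|t|$ sufficiently small, set $z_{t}:=\sigma(s+t)\in I_{\Sigma}$ and define
\[
L(t):=|x-z_{t}|_{\Sigma_{A}}+|z_{t}-y|_{\Sigma_{B}},
\]
the total length of the concatenation of the straight segment from $x$ to $z_{t}$ in $U_{A}$ with the straight segment from $z_{t}$ to $y$ in $U_{B}$. This concatenation is a path in $\Sigma$ from $x$ to $y$, so since $[x,z]\cup[z,y]$ is a $\Sigma$-geodesic realising the distance from $x$ to $y$ we have $L(t)\geq L(0)$ for all small $|t|$, whence $L^{\prime}(0)=0$.

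Applying the classical Euclidean first variation formula separately in $U_{A}$ and $U_{B}$, where the endpoint $z_{t}$ moves at $t=0$ with velocity $\sigma^{\prime}(s)$ identified via the gluing with both $\sigma_{A}^{\prime}(s)$ and $\sigma_{B}^{\prime}(s)$, and the terminal unit velocities of the two segments at $z$ are the unit vectors from $x$ towards $z$ and from $y$ towards $z$ respectively, gives
\[
L^{\prime}(0)=-\cos\theta_{A}-\cos\theta_{B}.
\]
Combined with $L^{\prime}(0)=0$ and $\theta_{A},\theta_{B}\in(0,\pi)$ this forces $\theta_{A}+\theta_{B}=\pi$.

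The main obstacle is the preliminary step of reducing to the case where the initial portions of $[x,z]$ and $[z,y]$ near $z$ are genuine open Euclidean segments transverse to $I_{\Sigma}$ and not, for instance, arcs of $I_{\Sigma}$ itself. This requires excluding the possibilities that a $\Sigma$-geodesic approaches $z$ tangentially along $I_{\Sigma}$ or that the $\Sigma_{A}$-branch $[x,z]$ makes an excursion into $\Sigma_{B}$ before reaching $z$. Both can be ruled out in a neighbourhood of $z$ by invoking the local $CAT(0)$ structure provided by Proposition~\ref{cat0subsets}: a tangential arrival would force the geodesic to run along $I_{\Sigma}$ on an open subarc, which contradicts the uniqueness of geodesics in the $CAT(0)$ pieces $\Sigma_{A}$ and $\Sigma_{B}$ together with the hypotheses $x\notin I_{A}$ and $y\notin I_{B}$.
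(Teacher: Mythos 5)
Your argument is correct in its essentials and reaches the conclusion by a genuinely different, and arguably cleaner, route than the paper's. The paper argues by contradiction: assuming $\theta_A+\theta_B<\pi$, it builds an explicit competing path through a shifted crossing point $\sigma_A(\rho s)\equiv\sigma_B(\rho s)$, with $\rho$ dictated by the isosceles-triangle Observation~\ref{obs1}, and shows that the ratio of its length to that of the original broken path tends to $\tfrac{1}{2}\left(\left\vert AE\right\vert+\left\vert EB\right\vert\right)<1$ as $s\rightarrow0$; the case $\theta_A+\theta_B>\pi$ is left to symmetry. Your first-variation identity $L^{\prime}(0)=-\cos\theta_A-\cos\theta_B=0$ is the differential form of the same perturbation (sliding the crossing point along $I_{\Sigma}$), but it treats both deviations from $\pi$ at once and replaces the triangle construction and the limit of ratios by one line of calculus. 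The prerequisites are the same in both proofs: transversality of $[x,z]$ and $[z,y]$ to $I_{\Sigma}$ at $z$, so that the competing chords stay in $\Sigma_A$ and $\Sigma_B$ and realize the intrinsic distances (this is exactly the content of the paper's Observation~\ref{obs2}; note that your ``Euclidean half-disks'' are really regions bounded by curved arcs, which is why transversality, not just smallness, is needed), and the reduction to points near $z$ using that ``geodesic at $z$'' gives local minimality.

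The one soft spot is your exclusion of tangential arrival. Your stated reason --- that tangency would force $[x,z]$ to contain an open subarc of $I_{\Sigma}$, contradicting uniqueness of geodesics in the $CAT(0)$ pieces --- does not hold up: since $\kappa_A\leq0$, the tangent line to $I_A$ at $z_A$ locally lies in $\Sigma_A$, so $[x,z]$ can be a straight segment tangent to $\partial\Sigma_A$ at $z$ meeting $I_A$ only at $z$; and even if it did contain a subarc of $I_A$, that arc is itself a local geodesic of $\Sigma_A$ (cf.\ the Remark after Theorem~\ref{main}), so no contradiction with uniqueness arises. The case genuinely needs excluding, since $\theta_A\in\{0,\pi\}$ together with the conclusion would force $\theta_B\in\{0,\pi\}$, which is impossible on the convex side ($\kappa_B\geq0$, $y\notin I_B$). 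It can be handled inside your own framework: if $\theta_A=0$ the chords $[x,z_t]$ remain in $\Sigma_A$ for $t$ on the side of the tangency, and the one-sided derivative $L^{\prime}(0^{+})=-1-\cos\theta_B<0$ already contradicts minimality (symmetrically for $\theta_A=\pi$). To be fair, the paper disposes of this case with the bare assertion that $[z_A,x]$ ``cannot be tangent to $\partial\Sigma_A$'', so your proof is no less complete than the original here; but the justification you give for the exclusion should be replaced by the one-sided variation.
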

For the proof of the above Lemma we will use the following straightforward facts:
\begin{observation}\label{obs1}
Let $\theta_A , \theta_B$ be non-zero angles satisfying $\theta_A +\theta_B < \pi.$ Consider the Euclidean isosceles triangle $\triangle(O,A,B)$ with $ \left\vert OA \right\vert=\left\vert OB \right\vert=1$ and $A\widehat{O}B =\theta_A + \theta_B. $ The unique point $E\in AB$ such that $A\widehat{O}E=\theta_A$ and $E\widehat{O}B=\theta_B$ has distance $\rho=\left\vert OE \right\vert$ from $O$ with $\rho$  uniquely determined by $\theta_A , \theta_B .$
\end{observation}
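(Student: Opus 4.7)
The plan is to establish the claim by producing an explicit closed-form expression for $\rho$ as a function of $\theta_A$ and $\theta_B$ via elementary plane trigonometry; once such a formula is exhibited, the assertion that $\rho$ is uniquely determined by $\theta_A,\theta_B$ is automatic. Conceptually, this is just the rigidity of Euclidean triangles in disguise: the triangle $\triangle(O,A,B)$ is determined up to isometry by $|OA|=|OB|=1$ and apex angle $\theta_A+\theta_B$, and $E$ is then the unique intersection of a determined ray with the side $AB$, so $\rho=|OE|$ is an honest function of $(\theta_A,\theta_B)$; still, a formula makes this completely transparent.

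First I would verify the set-up is well posed: the hypothesis $\theta_A,\theta_B>0$ together with $\theta_A+\theta_B<\pi$ ensures $\triangle(O,A,B)$ is non-degenerate, and the ray emanating from $O$ inside the angle $A\widehat{O}B$ and making angle $\theta_A$ with $OA$ is uniquely determined. Since $0<\theta_A<\theta_A+\theta_B$, this ray meets the opposite side $AB$ at a unique interior point $E$, so $\rho$ is unambiguously defined. Next I would apply the law of sines to the sub-triangle $\triangle(O,A,E)$. The angle at $A$ in this sub-triangle coincides with the base angle of the isosceles triangle $\triangle(O,A,B)$, which equals $(\pi-\theta_A-\theta_B)/2$; the angle at $O$ is $\theta_A$ by construction, so the remaining angle at $E$ is
\[
\pi-\theta_A-\frac{\pi-\theta_A-\theta_B}{2}=\frac{\pi-\theta_A+\theta_B}{2},
\]
which lies strictly in $(0,\pi)$ by the standing hypothesis, so its sine is positive. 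The law of sines with $|OA|=1$ then gives
\[
\rho=\frac{\sin\bigl((\pi-\theta_A-\theta_B)/2\bigr)}{\sin\bigl((\pi-\theta_A+\theta_B)/2\bigr)}=\frac{\cos\bigl((\theta_A+\theta_B)/2\bigr)}{\cos\bigl((\theta_A-\theta_B)/2\bigr)},
\]
and the right-hand side is visibly a function only of $\theta_A$ and $\theta_B$, proving the claim.

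There is no genuine obstacle: the only thing to watch is the non-vanishing of the denominator, which is guaranteed by $|\theta_A-\theta_B|<\pi$ (a weaker consequence of the hypotheses). If one prefers a purely synthetic argument, the same conclusion follows without any computation from the SAS congruence criterion applied to $\triangle(O,A,B)$ together with the uniqueness of the intersection $E$ of the $\theta_A$-ray with $AB$, so either route works; I would include the trigonometric formula above because it will likely be convenient for the application of the observation in the proof of Lemma~\ref{piangle}.
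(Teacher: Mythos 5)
Your proof is correct: the angle count in $\triangle(O,A,E)$ and the law-of-sines computation yielding $\rho=\cos\bigl((\theta_A+\theta_B)/2\bigr)/\cos\bigl((\theta_A-\theta_B)/2\bigr)$ are both right, and the non-degeneracy and denominator checks are handled. The paper itself offers no proof of this Observation (it is explicitly labelled a ``straightforward fact''), so your argument simply supplies the omitted verification, and the explicit formula is a harmless bonus.
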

\begin{observation}\label{obs2}
Let $\sigma_1 , \sigma_2$ be two smooth curves in $\mathbb{R}^2$ defined on an interval containing $0\in\mathbb{R},$ such that  $\sigma_1 (0)= \sigma_2(0)$ and the tangent vectors $\sigma_1^{\prime} (0), \sigma_2^{\prime}(0)$ are linearly independent. Then there exists $s_0>0$ such that for all $s<s_0$ the images $\mathrm{Im}\,\sigma_1|_{[0,s]}$, $\mathrm{Im}\,\sigma_2|_{[0,s]}$ and the geodesic segment $\left[\sigma_1(s), \sigma_2(s)\right]$ have disjoint interiors.
\end{observation}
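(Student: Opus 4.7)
The plan is to work in coordinates adapted to the transverse intersection at $\sigma_{1}(0)=\sigma_{2}(0)$ and then verify three pairwise disjointness claims. Since any linear change of coordinates of $\mathbb{R}^{2}$ is a homeomorphism that carries straight segments to straight segments, the statement is invariant under such a change. Using the linear independence of $\sigma_{1}^{\prime}(0)$ and $\sigma_{2}^{\prime}(0)$, I would first normalize so that $\sigma_{1}(0)=\sigma_{2}(0)=0$ with $\sigma_{1}^{\prime}(0)=e_{1}$ and $\sigma_{2}^{\prime}(0)=e_{2}$. Taylor's theorem then yields $\sigma_{i}(t)=te_{i}+r_{i}(t)$ on some interval $[-s_{0},s_{0}]$ with $\left\Vert r_{i}(t)\right\Vert \leq Ct^{2}$; moreover, since $r_{i}^{\prime}(0)=0$, the mean value theorem gives the stronger bound $\left\Vert r_{i}(t)-r_{i}(t^{\prime})\right\Vert \leq Cs_{0}\left\vert t-t^{\prime}\right\vert$.

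With these estimates I would establish, for $s$ sufficiently small, three disjointness facts: $\mathrm{Im}\,\sigma_{1}|_{(0,s]}\cap\mathrm{Im}\,\sigma_{2}|_{(0,s]}=\emptyset$, and each of $\mathrm{Im}\,\sigma_{i}|_{(0,s)}$ is disjoint from the open segment $(\sigma_{1}(s),\sigma_{2}(s))$. The first is a sector argument: for $s\leq 1/(4C)$ and $t\in(0,s]$ one has $Ct^{2}\leq t/4$, hence the first coordinate of $\sigma_{1}(t)$ is at least $3t/4$ while the modulus of its second coordinate is at most $t/4$. Thus $\mathrm{Im}\,\sigma_{1}|_{(0,s]}$ lies in the open sector $\{(x,y):3\left\vert y\right\vert<x\}$ and, by symmetry, $\mathrm{Im}\,\sigma_{2}|_{(0,s]}$ lies in the disjoint sector $\{(x,y):3\left\vert x\right\vert<y\}$.

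For the remaining two claims I would introduce the determinant $h(t):=\det\bigl[\sigma_{1}(t)-\sigma_{1}(s),\,\sigma_{2}(s)-\sigma_{1}(s)\bigr]$, whose zero set picks out those $t$ for which $\sigma_{1}(t)$ lies on the full straight line through $\sigma_{1}(s)$ and $\sigma_{2}(s)$. Using the stronger mean-value bound one writes $\sigma_{1}(t)-\sigma_{1}(s)=(t-s)(e_{1}+\varepsilon_{1})$ and $\sigma_{2}(s)-\sigma_{1}(s)=s\bigl((e_{2}-e_{1})+\varepsilon_{2}\bigr)$ with vector-valued error terms $\varepsilon_{j}$ of norm $O(s)$. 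Therefore $h(t)=(t-s)\,s\,\bigl[\det(e_{1},e_{2}-e_{1})+O(s)\bigr]=(t-s)\,s\,(1+O(s))$, and for $s$ small the bracketed factor remains above $1/2$. Hence $h(t)=0$ forces $t=s$, so $\mathrm{Im}\,\sigma_{1}|_{[0,s)}$ avoids the entire line through $\sigma_{1}(s),\sigma_{2}(s)$, a fortiori the open segment; the analogous claim for $\sigma_{2}$ follows by swapping the two curves.

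The main obstacle is making the remainder bookkeeping uniform in $t\in[0,s]$, not only asymptotically as $t\to s$. The saving observation is $r_{i}^{\prime}(0)=0$, which lets the common factor $(t-s)$ be pulled cleanly out of $\sigma_{1}(t)-\sigma_{1}(s)$, leaving a bracket of the form $1+O(s)$ that is bounded away from zero independently of $t$; without such uniform control the determinant could conceivably vanish at some $t$ strictly between $0$ and $s$.
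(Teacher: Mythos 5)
The paper states this Observation without proof, introducing it only as one of two ``straightforward facts,'' so there is no argument of the authors' to compare yours against. Your proposal is a correct and complete justification: the normalization to $\sigma_1'(0)=e_1$, $\sigma_2'(0)=e_2$ is legitimate because a linear change of coordinates preserves straight segments, the sector estimate $3|y|<x$ versus $3|x|<y$ cleanly separates the two arcs away from the common endpoint, and the determinant computation $h(t)=(t-s)s\bigl(1+O(s)\bigr)$, with the remainder controlled uniformly via $r_i'(0)=0$, correctly shows each arc interior misses the chord. The only point worth making explicit is that each $\sigma_i$ is injective on $[0,s]$ for small $s$ (immediate, since its first, resp.\ second, coordinate is strictly increasing there), so that ``interior of the image'' indeed means the image of the open interval.
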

\begin{proof}[Proof of Lemma \ref{piangle}]
Assume, on the contrary, that $\theta_A +\theta_B < \pi. $

Let $\alpha : \left[ 0,\left\vert z_A-x \right\vert \right]\rightarrow \Sigma_A$ be an arc length parametrization of the geodesic segment $[z_A,x]$ and, similarly, $\beta : \left[ 0,\left\vert z_B-y \right\vert \right]\rightarrow \Sigma_B$ for the geodesic segment $[z_B,y].$ As $[z_A,x]$ cannot be tangent to $\partial \Sigma_A ,$ observation \ref{obs2} applies to the curves $\alpha$ and $\sigma_A$ and provides a positive $s_A.$ Similarly, let $s_B$ be the positive number provided by Observation \ref{obs2} applied to the curves $\beta$ and $\sigma_B$.

For all $s<\min\left\{s_A,s_B\right\}$ let $\sigma_A(\rho s), \sigma_B(\rho s)$ be the points on $\partial \Sigma_A ,\partial \Sigma_B $ respectively, determined by the number $\rho$ provided by Observation \ref{obs1} which can be used because we assumed that that $\theta_A +\theta_B < \pi. $ Note that the points $\sigma_A(\rho s)$ and $ \sigma_B(\rho s), $ $s<\min\left\{s_A,s_B\right\}$ coincide under the identification $I_A \equiv I_B. $ By Observation \ref{obs2}, the union of the segments
\[ \gamma_s \equiv \left[ \alpha(s), \sigma_A(\rho s)\right]
\cup
\left[ \beta(s), \sigma_B(\rho s)\right]
\]
is a curve in $\Sigma$ which intersects $I_{\Sigma}$ transversely at the point $\sigma_A(\rho s)\equiv \sigma_B(\rho s).$ We will show that for small enough $s,$ the length of the curve $\gamma_s$ is smaller than
the length of $[ \alpha(s),z]\cup [z,\beta(s)]$ contradicting the assumption that the union of the geodesic segments $[x,z]$ and $[z,y]$ is a geodesic.
%$\left\vert\alpha(s)-z\right\vert +\left\vert\beta(s)-z \right\vert $

We form the following ratio and examine its limit as $s\rightarrow 0:$
\begin{align}\frac{\textrm{Length}\,(\gamma_s)}{\textrm{Length}\,([ \alpha(s),z_A]\cup [z_B,\beta(s)])} &=
\frac{\left\vert \alpha (s) - \sigma_A(\rho s)\right\vert +
          \left\vert \beta (s) - \sigma_B(\rho s)\right\vert
}
{\left\vert \alpha (s) -z_A \right\vert + \left\vert \beta (s) -z_B \right\vert} \nonumber
\\[4mm]
  & = \frac{1}{2}\left\vert \frac{\alpha (s)}{s} - \frac{\sigma_A(\rho s)}{s}\right\vert +
  \frac{1}{2}\left\vert \frac{\beta (s)}{s} - \frac{\sigma_B(\rho s)}{s}\right\vert
  \nonumber
\end{align}
where we used that $\alpha, \beta$ are parametrized by arc-length. hence,
\[\left\vert \alpha (s) -z_A \right\vert=s=\left\vert \beta (s) -z_B \right\vert.\]
The limit of the right hand side as $s\rightarrow 0$ is equal to
\begin{align}
 \frac{1}{2}\left\vert \alpha^{\prime} (0) -\sigma_{A}^{\prime} (0) \rho \right\vert +
 \frac{1}{2}\left\vert \beta^{\prime} (0) -\sigma_{B}^{\prime} (0) \rho \right\vert  & =  \frac{1}{2}\left\vert AE \right\vert +
 \frac{1}{2}\left\vert EB \right\vert \nonumber\\
 & <
 \frac{1}{2} \left(\left\vert OA \right\vert +\left\vert OB \right\vert \right)= \frac{1}{2} \left(1+1\right) =1\nonumber
\end{align}
where $OA,OB$ and $AEB$ are the sides of the triangle defined in Observation \ref{obs1}.
\end{proof}
\begin{proof}[Proof of Proposition \ref{ugl}]
We only need to check uniqueness of geodesic segments for a sufficiently small simply connected ball $N$ centered
at a point in $I_{\Sigma}.$ We first show the following \\
\underline{CLAIM}: Let $z,w$ be two distinct points in $I_{\Sigma}\cap N, $ and consider points $x\in \left(\Sigma_{A}\cap N\right)\setminus I_{\Sigma}$ and $y_z ,y_w\in \left(\Sigma_{B}\cap N\right)\setminus I_{\Sigma}.$ Assume that
\begin{align}
 &\lbrack x,y_z\rbrack =
\lbrack x,z\rbrack\cup\lbrack z,y_z\rbrack \nonumber \\
\textrm{and\ } &
\lbrack x,y_w\rbrack =\lbrack x,w\rbrack\cup\lbrack w,y_w\rbrack
\nonumber
\end{align}
are both geodesic segments in $\Sigma$ with $\lbrack z,y_z\rbrack\cap \lbrack w,y_w\rbrack = \emptyset. $ Then
\[ w\widehat{z}y_z +z\widehat{w}y_w > \pi.\]
\begin{center}
\begin{figure}[ptb]
\hspace*{7mm}\includegraphics[scale=0.65]{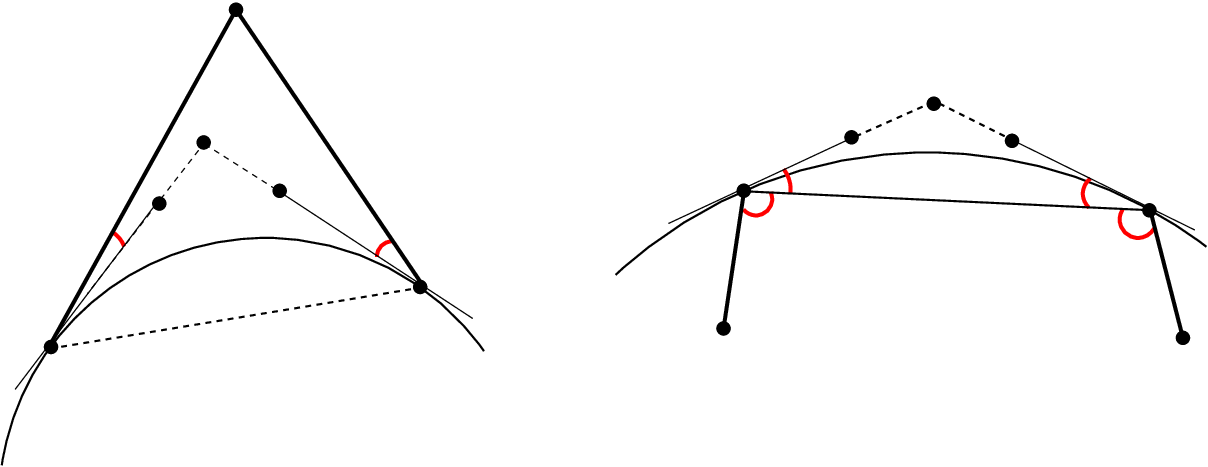}
%\\vspace*{-8mm}
\newline \begin{picture}(22,12)
\put(91,166){$x$}
\put(308,137){$x_B$}
\put(272,124){$z_B$}
\put(338,122){$w_B$}
\put(240,52){$y_z$}
\put(390,49){$y_w$}
\put(79,124){$x_A$}
\put(73,97){$z_A$}
\put(106,108){$w_A$}
\put(25,54){$z$}\put(246,108){$z$}
\put(151,77){$w$}\put(378,102){$w$}
\put(15,7){$ \partial \Sigma_A$}
\put(10,100){$ \Sigma_A$}\put(299,35){$ \Sigma_B$}
\put(205,67){$ \partial \Sigma_B$}
\end{picture}
\caption{The tangent lines at $z,w$ and the angles involved in the proof of the Claim in Proposition \ref{ugl}.}
\label{n_gon}
\end{figure}
\end{center}
\underline{Proof of Claim}:  Denote by $I_{zw}$ the
subinterval of $I_{\Sigma}\cap N$ with endpoints $z,w$ which is parametrized by a subinterval of $\subset J$ denoted by $J_{zw}.$ By property (k3) we have
\begin{equation}
{\displaystyle \int \limits_{J_{zw}}}\left \vert \kappa_{A}\left(  s\right)
\right \vert ds>{\displaystyle \int \limits_{J_{zw}}}\kappa_{B}\left(  s\right)
ds\label{UG0}%
\end{equation}
The lines tangent to $I_{A}$ at the points $z,w$ intersect at a point in $\Sigma_{A}$, say $x_{A},$ contained in the (Euclidean) triangle with vertices $z,x,w$ (see Figure \ref{n_gon}). In view of the generalization to Riemann surfaces in Section \ref{sec4} below, we will not use in our proof the intersection point $x_A.$ Instead, we pick points
$z_A \in \lbrack z,x_A\rbrack$ and $w_A \in \lbrack w,x_A\rbrack $
and for the corresponding angles we have (see for example \cite[Prop. 2.2.3]{Pre})
\[
z_{A}\widehat{z}w+w_{A}\widehat{w}z={\displaystyle \int \limits_{J_{zw}}%
}\left \vert \kappa_{A}\left(  s\right)  \right \vert ds .
\]
In a similar manner, for points $z_B$ (resp. $w_B$) on the line tangent to $I_B$ at the point $z$ (resp. $w$) we have
\[
z_{B}\widehat{z}w+w_{B}\widehat{w}z={\displaystyle \int \limits_{J_{zw}}}\kappa_{B}\left(  s\right)   ds .
\]
Using the last two equations, inequality (\ref{UG0}) becomes
\begin{equation}
z_{A}\widehat{z}w+w_{A}\widehat{w}z>\mathrm{\ }z_{B}\widehat{z}w+w_{B}%
\widehat{w}z.\label{UG1}
\end{equation}
Since $\left[  x,z\right]  \cup \left[  z,y_z\right]  $ is a geodesic at $z$, we have by Lemma \ref{piangle}
\begin{equation}
x\widehat{z}z_{A}+z_{B}\widehat{z}w+w\widehat{z}y_z=\pi .\label{UG2}
\end{equation}
Similarly, we have
\begin{equation}
x\widehat{w}w_{A} +w_{B}\widehat{w}z+z\widehat{w}y_w
=\pi.\label{UG3}%
\end{equation}
By summation of (\ref{UG2}) and (\ref{UG3}) and using (\ref{UG1}) we obtain
\begin{align}
 x\widehat{z}z_{A}+\underline{z_{B}\widehat{z}w}+w\widehat{z}y_z
 +
x\widehat{w}w_{A}+\underline{w_{B}\widehat{w}z}+z\widehat{w}y_w  = & \,\,2\pi \xRightarrow{\text{\scriptsize \ By (\ref{UG1}) \ }}
\nonumber \\
%x\widehat{z}z_{A}+z_{A}\widehat{z}w+w\widehat{z}y_z + x\widehat{w}w_{A}+w_{A}\widehat{w}z+z\widehat{w}y_w = &\,\, 2\pi \\
\left(x\widehat{z}z_{A}+\underline{z_{A}\widehat{z}w}\right) +w\widehat{z}y_z
 +
\left(x\widehat{w}w_{A}+\underline{w_{A}\widehat{w}z}\right) +z\widehat{w}y_w > &\,\, 2\pi  \nonumber
  \\
x\widehat{z}w + w\widehat{z}y_z+x\widehat{w}z + z\widehat{w}y_w > 2\pi
\end{align}
The latter inequality completes the proof of the Claim because from the triangle with vertices $x,z,w$ we have
$ x\widehat{z}w+x\widehat{w}z < \pi.$

We return now to the proof of uniqueness of geodesic segments in $N.$ Let $x,y\in N. $ We first examine the case
\[x\in \left(\Sigma_{A}\cap N\right)\setminus I_A \mathrm{\ \ and\ \ } y\in x\in \left(\Sigma_{B}\cap N\right)\setminus I_B.\] Assume there exist two
geodesic segments
\[
\lbrack x,z]\cup \lbrack z,y]\mathrm{\ and\ }[x,w]\cup \lbrack w,y]
\]
with
\[
\lbrack x,z],[x,w]\subset \Sigma_{A}\mathrm{\ and\ }[z,y],[w,y]\subset
\Sigma_{B}%
\]
and $z,w\in I_{\Sigma}\cap N$ and $z\neq w.$ By the Claim we have
\[  y\widehat{z}w+y\widehat{w}z > \pi \]
which is a contradiction in the triangle with vertices $y,z,w .$
\\[6mm]
\begin{center}
\begin{figure}[h]
\hspace*{7mm}\includegraphics[scale=0.70]{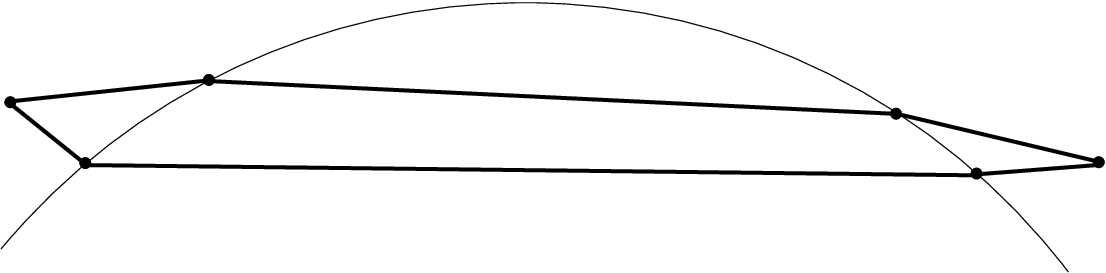}
%\\vspace*{-8mm}
\newline \begin{picture}(22,12)
\put(12,74){$x$}\put(396,52){$y$}
\put(318,77){$w^{\prime}$}
\put(214,23){$\Sigma_B$}
\put(83,88){$w$}
\put(15,14){$ I_{\Sigma}$}
\put(214,132){$\Sigma_A$}
\put(47,43){$ z$}
\put(340,37){$ z^{\prime}$}
\end{picture}
\caption{The case, in the proof of uniqueness in Proposition \ref{ugl}, where $x,y\in\Sigma_A$ and the assumed geodesics both intersect $\Sigma_B \setminus \partial \Sigma_B .$}
\label{n_gon2}
\end{figure}
\end{center}

We next examine the case both $x,y$ belong to $\left(\Sigma_{A}\cap N\right) \setminus I_A$ and assume that there exist two geodesics joining $x,y$
\[
\begin{split}
\lbrack x,z]\cup \lbrack z,z^{\prime}]  &  \cup \lbrack z^{\prime}%
,y]\mathrm{\  \ and\  \ }[x,w]\cup \lbrack w,w^{\prime}]\cup \lbrack w^{\prime
},y]\\
&  [z,z^{\prime}],[w,w^{\prime}]\subset \Sigma_{B}
\end{split}
\]
with $z,z^{\prime},w,w^{\prime}$ distinct points in $I_{\Sigma}\cap N$ (see Figure \ref{n_gon2}). We may also assume that $[z,z^{\prime}]\cap \lbrack w,w^{\prime}]=\emptyset$ because if
$[z,z^{\prime}]\cap \lbrack w,w^{\prime}]\neq \emptyset$ we get a contradiction from the previous case.
By applying the Claim to the geodesic segments $\lbrack x,z]\cup \lbrack z,z^{\prime}] $ and $[x,w]\cup \lbrack w,w^{\prime}]$ we obtain
$$z\widehat{w}w^{\prime} + w\widehat{z}z^{\prime} >\pi$$
and similarly,
$$w\widehat{w^{\prime}}z^{\prime} + \widehat{z^{\prime}}w^{\prime} >\pi .$$
Adding these inequalities we obtain a contradiction in the quadrilateral with vertices $w,z,z^{\prime}, w^{\prime} $ contained in $\Sigma_B \subset \mathbb{R}^2.$

We last examine the case both $x,y$ belong to $\Sigma_{A}\cap N$ and assume
that there exist two geodesics joining $x,y$ that is, one geodesic from $x$ to $y$
intersecting $I_{\Sigma}$ at two points $z, z^{\prime}$ and the other one not intersecting $\Sigma_B \setminus \partial \Sigma_B :$
\[
\lbrack x,z]\cup \lbrack z,z^{\prime}]\cup \lbrack z^{\prime}
,y]\mathrm{\  \ and\  \ }[x,y]
\]
with
\[
\lbrack x,z],[z^{\prime},y],[x,y]\subset \Sigma_{A}\mathrm{\  \ and\  \ }[z,z^{\prime}]\subset \Sigma_{B}
\]
\newline
\begin{center}
\begin{figure}[h]
\hspace*{7mm}\includegraphics[scale=0.65]{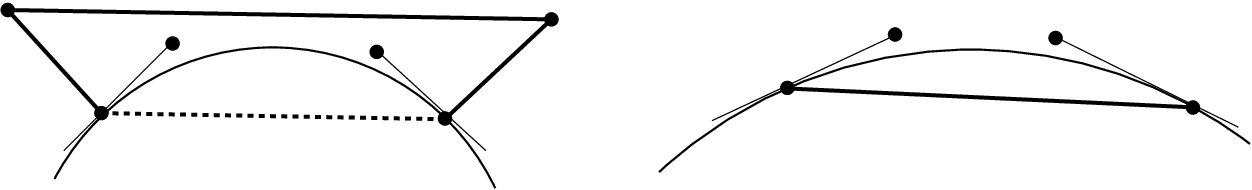}
%\\vspace*{-8mm}
\newline \begin{picture}(22,12)
\put(12,74){$x$}
\put(395,48){$z^{\prime}$}
\put(198,70){$y$}
\put(304,23){$\Sigma_B$}
\put(83,88){$\Sigma_A$}
\put(75,20){$ \left(\Sigma_A\right)^c$}
%\put(214,132){$\Sigma_A$}
\put(41,38){$ z$}
\put(59,64){$ z_A$}
\put(298,72){$ z_B$}\put(342,72){$ z^{\prime}_B$}
\put(164,38){$z^{\prime}$}\put(141,60){$z^{\prime}_A$}
\put(260,53){$ z$}
\end{picture}
\caption{The case, in the proof of uniqueness in Proposition \ref{ugl}, where $x,y\in \Sigma_A$ and only one of the assumed geodesics intersects $\Sigma_B \setminus \partial \Sigma_B .$}
\label{n_gon3}
\end{figure}
\end{center}
Using the same notation (see Figure \ref{n_gon3}), as in the proof of the Claim we may use inequality (\ref{UG0}) to obtain
\begin{equation}
z_A \widehat{z}z^{\prime} +z^{\prime}_A \widehat{z^{\prime}}z
>
z_B \widehat{z}z^{\prime} +z^{\prime}_B \widehat{z^{\prime}}z.
\label{UG5}
\end{equation}
As $\lbrack x,z]\cup \lbrack z,z^{\prime}]\cup \lbrack z^{\prime},y]$ is a geodesic at $z$ we have  by Lemma \ref{piangle}
\begin{equation}
x \widehat{z}z_A +z_B \widehat{z}z^{\prime}=\pi\nonumber
\end{equation}
and similarly
\begin{equation}
y \widehat{z^{\prime}}z^{\prime}_A +z^{\prime}_B \widehat{z^{\prime}}z=\pi\nonumber
\end{equation}
By adding the above two equalities and using inequality (\ref{UG5}) we obtain
\[  x \widehat{z}z^{\prime} + y\widehat{z^{\prime}}z =
\left( x \widehat{z}z_A + \underline{z_A \widehat{z}z^{\prime}} \right) +\left(y \widehat{z^{\prime}}z^{\prime}_A
+\underline{z^{\prime}_A \widehat{z^{\prime}}z}\right)
\stackrel{\mathrm{By (\ref{UG5})}}{>}
x \widehat{z}z_A + \underline{z_B \widehat{z}z^{\prime}}
+y \widehat{z^{\prime}}z^{\prime}_A
+\underline{z^{\prime}_B \widehat{z^{\prime}}z} =2\pi.
\]
The inequality $x \widehat{z}z^{\prime} + y\widehat{z^{\prime}}z >2\pi $ is a contradiction because the quadrilateral with vertices $z,y,z^{\prime},z$ lives in $\Sigma_A \cup \left(\Sigma_A\right)^c =\mathbb{R}^2.$
Note that the above proof works verbatim in the case the segment $[x,y]$ in $\Sigma_A$ is tangent to $\partial \Sigma_A .$

This complete the proof of the Proposition.
\end{proof}
\begin{center}
\begin{figure}[h]
\hspace*{7mm}\includegraphics[scale=3]{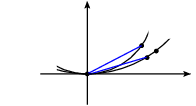}
%\\vspace*{-8mm}
\newline \begin{picture}(22,12)
\put(212,109){$A$}\put(230,74){$B$}
\put(298,62){$x$}\put(150,50){$ O$}
\put(248,86){$B_1$}
\end{picture}
\caption{The curvature inequality implies bigger angles.}
\label{updown}
\end{figure}
\end{center}
We will need the following

\begin{lemma}\label{downup}
Let $\sigma , \tau$ be two smooth curves in $\mathbb{R}^2$ defined on an interval $J$ containing $0$ such that
\[ \sigma (0) = \tau(0) = (0,0)
         ,
\sigma^{\prime} (0) = \tau^{\prime}(0)\]
and their curvatures $\kappa_{\sigma} ,\kappa_{\tau}$
satisfy
\[\kappa_{\sigma}(s) \leq \kappa_{\tau}(s) \mathrm{\ for\ all\ }s\in J\mathrm{\ \ and\ \ }
{\displaystyle \int \limits_{J}}\kappa_{\sigma}\left(  s\right) ds < \frac{\pi}{2},
{\displaystyle \int \limits_{J}}\kappa_{\tau}\left(  s\right) ds < \frac{\pi}{2}.
\]
Let $A,B$ be points on $\tau, \sigma $ respectively, see Figure \ref{updown},  such that the Euclidean segments $OA$ and $OB$ have equal length
\[\left\vert OA \right\vert =\left\vert OB \right \vert .\]
Then the angles formed by the segments $OA$ and $OB$ and the $x-$axis satisfy
\[ x\widehat{O}B \leq x\widehat{O}A\]
where equality holds when $\kappa_{\sigma}(s) = \kappa_{\tau}(s)$ for all $s\in J. $
\end{lemma}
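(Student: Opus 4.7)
I would set coordinates so that $\sigma(0)=\tau(0)=(0,0)$ and $\sigma'(0)=\tau'(0)=(1,0)$, and work on the half $s\geq 0$ of $J$ (the opposite half being symmetric). Introduce the tangent-angle functions
\[
\theta_\sigma(s)=\int_0^s \kappa_\sigma(u)\,du, \qquad \theta_\tau(s)=\int_0^s \kappa_\tau(u)\,du,
\]
so that $\sigma(s)=\int_0^s (\cos\theta_\sigma,\sin\theta_\sigma)\,du$ and $\tau(s)=\int_0^s(\cos\theta_\tau,\sin\theta_\tau)\,du$; in the natural positive-curvature case suggested by Figure \ref{updown} we have $0\leq \theta_\sigma(s) \leq \theta_\tau(s) < \pi/2$. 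Writing each curve in polar form $\sigma(s)=r_\sigma(s)e^{i\phi_\sigma(s)}$ and $\tau(s)=r_\tau(s)e^{i\phi_\tau(s)}$, the target inequality becomes $\phi_\sigma(s_B)\leq \phi_\tau(s_A)$, where $s_A,s_B$ are the arc-length parameters with $r_\tau(s_A)=r_\sigma(s_B)=|OA|=|OB|$.

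The plan is to prove three intermediate facts and chain them: (i) the chord-length comparison $r_\sigma(s)\geq r_\tau(s)$ at equal arc length; (ii) the chord-angle comparison $\phi_\sigma(s)\leq \phi_\tau(s)$ at equal arc length; (iii) monotonicity of $s\mapsto r_\tau(s)$ and $s\mapsto \phi_\tau(s)$. Given these, (i) together with the monotonicity of $r_\sigma$ (same argument as for $r_\tau$) forces $s_B\leq s_A$, and then (ii) and (iii) give
\[
x\widehat{O}B=\phi_\sigma(s_B)\leq \phi_\tau(s_B)\leq \phi_\tau(s_A)=x\widehat{O}A,
\]
with equality in each step forcing $\theta_\sigma\equiv\theta_\tau$, hence $\kappa_\sigma\equiv\kappa_\tau$ by differentiation.

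For (i) I would use the identity $r^2(s)=\int_0^s\!\int_0^s \cos(\theta(u)-\theta(v))\,du\,dv$ together with
\[
|\theta_\sigma(u)-\theta_\sigma(v)|=\Bigl|\int_v^u \kappa_\sigma\Bigr|\leq \Bigl|\int_v^u \kappa_\tau\Bigr|=|\theta_\tau(u)-\theta_\tau(v)|;
\]
the strict monotonicity of $\cos$ on $[0,\pi/2)$ then forces the $\sigma$-integrand to pointwise dominate the $\tau$-integrand. For (ii) the scalar $2$D cross product computes as
\[
\sigma(s)\times \tau(s)=\int_0^s\!\int_0^s \sin(\theta_\tau(v)-\theta_\sigma(u))\,du\,dv,
\]
and symmetrizing in $u\leftrightarrow v$ and applying $\sin a+\sin b=2\sin\tfrac{a+b}{2}\cos\tfrac{a-b}{2}$, with $a+b=\theta_\tau(u)+\theta_\tau(v)-\theta_\sigma(u)-\theta_\sigma(v)\geq 0$ and $|a-b|<\pi$, makes the integrand non-negative, whence $\phi_\tau\geq \phi_\sigma$. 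For (iii) differentiate the polar form of $\tau$ to get $r_\tau'=\cos(\theta_\tau-\phi_\tau)$ and $\phi_\tau'=\sin(\theta_\tau-\phi_\tau)/r_\tau$; since $\phi_\tau(s)$ is the argument of a positive superposition of unit vectors whose individual arguments all lie in the convex cone $[0,\theta_\tau(s)]\subset[0,\pi/2)$, one has $\theta_\tau-\phi_\tau\in[0,\pi/2)$, and both derivatives are non-negative.

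The main obstacle is the robustness of step (i) when $\kappa_\sigma,\kappa_\tau$ are not both of one sign: the absolute-value inequality above is immediate when both curvatures are non-negative but is not implied by $\kappa_\sigma\leq\kappa_\tau$ alone. If the lemma is required in that generality, I would replace (i) by a deformation argument—set $\kappa_t=(1-t)\kappa_\sigma+t\kappa_\tau$, let $r_t(s)$ be the corresponding chord length, and differentiate $r_t^2(s)$ in $t$ under the double integral to verify $\partial_t r_t^2(s)\leq 0$, a conclusion that uses only $\kappa_\tau-\kappa_\sigma\geq 0$; the same deformation scheme also delivers (ii) directly.
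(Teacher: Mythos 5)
Your argument is correct in the setting where both curvatures are non-negative, which is the situation of Figure \ref{updown} and the only one in which the Lemma is applied (there $\tau$ has curvature $-\kappa_A\geq 0$ and $\sigma$ has curvature $\kappa_B\geq 0$). Logically it has the same skeleton as the paper's proof: the paper gets your step (i) from Theorem 2-19 of \cite{Gug} (equal chords force the more curved arc to be longer), your step (ii) from Theorem 2-14 of \cite{Gug} ($\tau$ lies above $\sigma$ at equal arc length), and your step (iii) from ``convexity of $\sigma$''; the chaining is then the same, with your $\phi_\sigma(s_B)\le\phi_\tau(s_B)\le\phi_\tau(s_A)$ replacing the paper's $x\widehat{O}B< x\widehat{O}B_1< x\widehat{O}A$. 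What your version buys is self-containedness: the double-integral identities for $r^2$ and for $\sigma\times\tau$ replace the two citations, and the polar relations $r'=\cos(\theta-\phi)$, $r\phi'=\sin(\theta-\phi)$ make the convexity step explicit. Those computations all check out in the convex case.

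One caveat: the closing deformation remark is not correct. One has $\partial_t r_t^2(s)=-\iint\sin\bigl(\theta_t(u)-\theta_t(v)\bigr)\bigl(g(u)-g(v)\bigr)\,du\,dv$ with $g=\theta_\tau-\theta_\sigma$ non-decreasing, and the sign of the integrand is controlled only when $\theta_t$ is itself monotone, i.e.\ when $\kappa_t\ge 0$; it does not follow from $\kappa_\tau-\kappa_\sigma\ge 0$ alone. Indeed, for $\kappa_\sigma\equiv-1$ and $\kappa_\tau\equiv 0$ one gets $r_\sigma(s)=2\sin(s/2)<s=r_\tau(s)$, so the chord comparison (i) reverses and the deformation inequality goes the wrong way. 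The Lemma therefore carries an implicit convexity hypothesis on $\sigma$ (the paper's own proof also invokes ``convexity of $\sigma$'' and Schur-type theorems that require it). Since you only ever use the convex case, your proof stands; just drop the claimed extension to general sign-changing curvatures.
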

\begin{proof}
By Theorem 2-19 in \cite{Gug} the lengths of the sub-arcs $\arc{OA}$ and $\arc{OB}$ of $\tau$ and $\sigma$ respectively, satisfy
\[\left\vert \arc{OA} \right\vert > \left\vert \arc{OB} \right\vert . \]
Let $B_1$ be the (unique) point on $\sigma$ such that
\[ B\in  \arc{OB}_1 \mathrm{\ \ and\ \ }\left\vert \arc{OA} \right\vert = \left\vert \arc{OB}_1 \right\vert. \]
By Theorem 2-14 in \cite{Gug} $\tau$ is above $\sigma,$ in other words, \[ x\widehat{O}B_1 < x\widehat{O}A\]
and by convexity of $\sigma$
\[ x\widehat{O}B < x\widehat{O}B_1 \]
and this completes the proof of the Lemma.
\end{proof}
We now proceed with the

\begin{proof}[Proof of Theorem \ref{main}]
We only need to check the $CAT(0)$ inequality for a ball $N$
centered at a point $w\in I_{\Sigma}.$

By assumption (k3) there exist finitely many $t_1 ,\ldots t_{\mu}\in J,$ $\mu\in\mathbb{N}\cup \{0\}$ such that $\kappa_A (t_i) +\kappa_B (t_i)=0$ for $i=1,\ldots , \mu.$ We distinguish three cases:

\underline{Case 1}: The center $w$ of $N$ is distinct from $\sigma(t_i)$ for all $i=1,\ldots , \mu$ and $w\notin \partial I_{\Sigma}.$

\underline{Case 2}: The center $w$ of $N$ coincides with $\sigma(t_{i_0})$ for some $i_0 \in \{1,\ldots , \mu\}.$

\underline{Case 3}: The center $w$ of $N$ belongs to $\partial I_{\Sigma}.$
\\
Proof of Case 1: we may assume that the closure $\overline{N}$ of $N$ does not contain  $\sigma(t_i)$ for all $i=1,\ldots , \mu.$

%Let $J_N \in J$ be the closed subinterval of $J$ which parametrizes $\overline{N}\cap I_{\Sigma}.$

Let $s_w\in J$ be the unique parameter such that $w=\sigma(s_w).$ By assumption (k3) we have
\[ \left \vert \kappa_{A}\left(  s_w\right)  \right \vert -\kappa_{B}\left( s_w\right)  >0.  \]
%unique parameter such that $w=\sigma(s_w).$ By assumption (k3)
By continuity of the curvature function, there exist a subinterval $J_{s_w}$ of $J$ containing $s_w$
and a positive $\varepsilon  $ such that
\begin{equation}
\left \vert \kappa_{A}\left(  s^{\prime}\right)  \right \vert -\kappa_{B}\left(
s^{\prime \prime}\right)  >\varepsilon
,\mathrm{\  \ for\ all\ }s^{\prime},s^{\prime \prime}\in J_{s_w}.
\label{epsilon_interval}
\end{equation}
%By compactness of $J_N$ we obtain a finite cover $J_{s_{1}}\cup \cdots \cup
%J_{s_{\lambda}}$ of $J_N$ and a positive $\varepsilon$ such that for every$i=1,2,\ldots,\lambda$
%\begin{equation}
%\left \vert \kappa_{A}\left(  s^{\prime}\right)  \right \vert -\kappa_{B}\left( s^{\prime \prime}\right)  >\varepsilon,\mathrm{\  \ for\ all\ }s^{\prime},s^{\prime \prime}\in J_{s_{i}} \label{epsilon_interval}%
%\end{equation}
Moreover, we may assume that
\begin{equation}
{\displaystyle \int \limits_{J_{s_w}}}\left \vert \kappa_{A}\left(  s\right)
\right \vert ds< \frac{\pi}{2}
\mathrm{\ \ and\ \ }
{\displaystyle \int \limits_{J_{s_w}}}\kappa_{B}\left(  s\right) ds < \frac{\pi}{2}
\label{UG00}%
\end{equation}

\begin{center}
\begin{figure}[h]
%\hspace*{7mm}
\includegraphics[scale=0.5]{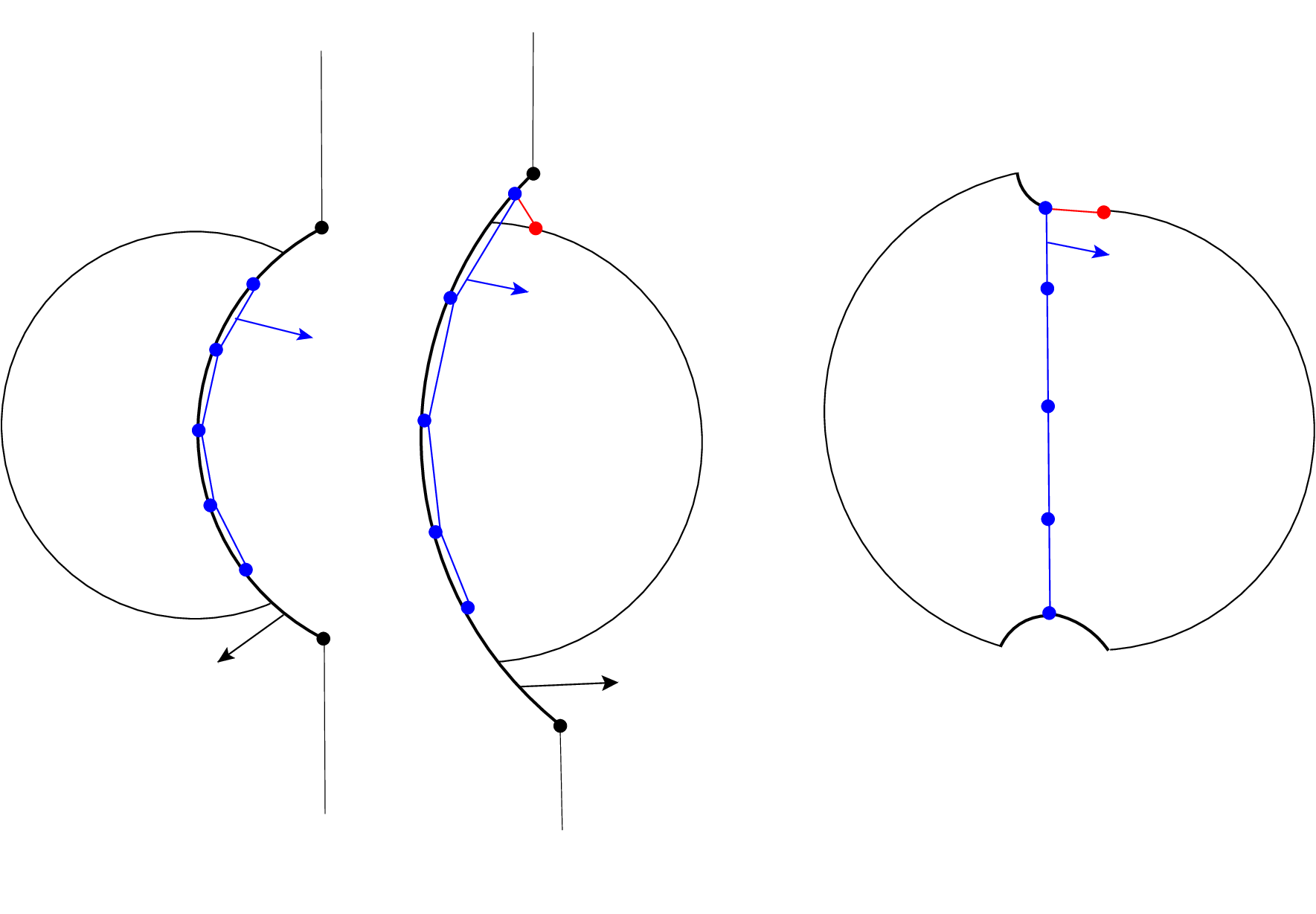}
%\\vspace*{-8mm}
\newline \begin{picture}(22,12)
\put(30,60){$\Sigma_A$}
%\put(396,60){$\Sigma$}
\put(220,60){$\Sigma_B$}
\put(235,170){$
\mathlarger{
\mathlarger{
\mathlarger{\mathlarger{
\mathlarger{\rightsquigarrow}
           }}
           }
           }$}
%\put(308,75){$I_{\Sigma}$}
\put(61,92){$I_{s_w}$}\put(200,89){$I_{s_w}$}
\put(326,180){$w$}
%\put(139,172){$w$}
%\put(69,172){$w$}
\put(142,174){$w\hskip-1mm = \hskip-1mm \color{blue} w^{k+1}_B$}
\put(172,216){$\color{blue} P^{k}_B$}
\put(102,202){$\color{blue} P^{k}_A$}
\put(143,252){$\color{blue} w^{2k+1}_B$}
\put(176,241){$\color{red} z^k$}
\put(359,248){$\color{red} z^k$}
\put(57,222){$\color{blue} w^{2k+1}_A$}
\put(155,114){$\color{blue} w^{1}_B$}
\put(60,125){$\color{blue} w^{1}_A$}
\put(20,170){${\color{blue} w^{k+1}_A}\hskip-2mm =\hskip-1mm w$}
\put(13,140){$N\cap \Sigma_A$}
\put(180,140){$N\cap \Sigma_B$}
\put(390,169){$\Sigma_{B}^{k}$}
\put(290,169){$\Sigma_{A}^{k}$}
\put(342,117){$\color{blue} w^{1}$}
\put(333,253){$\color{blue} w^{2k+1}$}
\put(343,180){$\color{blue} w^{k+1}$}
\put(360,226){$\color{blue} P^{k}$}
\end{picture}
\caption{The polygonal lines $P_A^k$ and $P_B^k$ and the regions $\Sigma_{A}^{k}, \Sigma_{B}^{k}$  which are glued along the polygonal lines to form $\Sigma^k .$}
\label{NN_Polygonal}
\end{figure}
\end{center}
We may further assume that the ball $N$ is small enough so that
\begin{equation}
 N\cap I_{\Sigma}\subset \sigma\left( J_{s_w}\right)\equiv I_{s_{w}}. \label{gin}
\end{equation}
Denote by $J_N$ the sub-interval of $J$ which parametrizes $ N\cap I_{\Sigma}, $ that is
\[ \sigma : J_N \longrightarrow  N\cap I_{\Sigma} . \]
By abuse of language we will denote again by $w$ the points in
$I_{A}$ and $I_{B}$ which correspond to $w\in I_{\Sigma}.$

As $\Sigma_{A}\subset \mathbb{R}^{2},$  we may form a simple polygonal line $P_{A}^{k}$  with $2k+1$ vertices $w_A^1, w_A^2, \ldots, w_A^{2k+1}$ (see Figure \ref{NN_Polygonal}) with the following properties
%P_{B}^{k}

\begin{itemize}
\item $P_{A}^{k}$ consists of $2k$ (Euclidean) segments all of equal length $\ell_{k} ,$

\item all vertices of $P_{A}^{k}$ belong to $I_A^{k},$

\item $w$ is the  vertex $w_A^{k+1}$ of $P_{A}^{k}$ and the endpoint vertices $w_A^1, w_A^{2k+1}$ determine a sub-curve $I_{A}^{k}$ of $I_{A}$ contained in $\sigma_A \left( J_{N}\right) \equiv N\cap I_{\Sigma},$

%\item all vertices of $P_{A}^{k}$ belong to $I_{A}$  and $w$ is the  vertex $w_{k+1}$ of $P_{A}^{k},$ and its endpoint vertices, say $w_{A}^{k-}$ and $w_{A}^{k+},$ determine a subinterval $I_{A}^{k}=\left[  w_{A}^{k-},w_{A}^{k+}\right]  $ of $I_{A}$ contained in $I_{s_{w}},$

\item $P_{A}^{k}$ minus its vertices is contained in the complement $\mathbb{R}^{2}%
\setminus \Sigma_{A}$ of $\Sigma_{A},$ (this follows from assumption (k1)),

\item as $k\rightarrow \infty,$ $\ell_{k}\rightarrow0$ and $\left\{I_{A}^{k}\right\}_{k\in\mathbb{N}}$ is an ascending sequence of sub-curves which converges to $ N\cap I_{\Sigma}.$
\end{itemize}
Clearly, if $L\left(  \cdot \right)  $ denotes the length of a curve, we have
\begin{equation}
\lim_{k\rightarrow \infty}L\left(  P_{A}^{k}\right)  =\lim_{k\rightarrow \infty
}L\left(  I_{A}^{k}\right)  =L\left(   N\cap I_{\Sigma} \right)  . \label{length_appr}%
\end{equation}
In a similar manner, we form a simple polygonal line $P_{B}^{k}$ consisting of $2k$
(Euclidean) segments all of length $\ell_{k}$ with $2k+1$ vertices $w_B^1, w_B^2, \ldots, w_B^{2k+1}$ such that

\begin{itemize}
\item $w$ is the vertex $w_B^{k+1}$ and the endpoint vertices $w_B^1, w_B^{2k+1}$ determine a sub-curve $I_{B}^{k}$ of $I_{B}$ contained in $\sigma_B \left( J_{N}\right) \equiv  N\cap I_{\Sigma},$

\item all vertices of $P_{B}^{k}$ belong to $I_B^{k},$

\item $P_{B}^{k}$ is contained in $\Sigma_{B},$ (this follows from assumption (k2)),

\item as $k\rightarrow \infty,$ $\left\{I_{B}^{k}\right\}_{k\in\mathbb{N}}$ is a (not necessarily ascending) sequence of sub-curves which converges to $ N\cap I_{\Sigma}.$
\end{itemize}
The last bullet is the only one which needs a comment: by construction, the polygonal lines $ P_{A}^{k}, P_{B}^{k}$ have equal length $L\left(  P_{A}^{k}\right)  =L\left(  P_{B}^{k}\right)  $ so  by (\ref{length_appr})
\begin{equation}
\lim_{k\rightarrow \infty}L\left(  P_{B}^{k}\right)  =\lim_{k\rightarrow \infty
}L\left(  I_{B}^{k}\right)  =L\left(   N\cap I_{\Sigma} \right)  . \label{length_appr2}%
\end{equation}
Let $\Sigma_{A}^{k}$ be the compact region bounded by
the simple closed curve
\[\left[   \partial \left( N\cap \Sigma_{A}\right)\setminus I_{A}^{k}\right]  \cup P_{A}^{k} .\]
Note that the above simple closed curve lives in $\mathbb{R}^2. $

%%%%%%%%%%%%%%%%%%%%%%%%%%%%%%%%%%%%%%%%%%%%%%%%%%%
%%%%%%%%%%%%%%%%%%%%%%%%%%%%%%%%%%%%%%%%%%%%%%%%%%%%
If
%%%%%%%%%%%%%%%%%%%%%%%%%%%%%%%%%%%%%%%%%%%%%%%%%%%%%%%%%
\footnote[3]{ In fact, since we work in $\mathbb{R}^2,$ the inclusion $I^k_B \subset N\cap I_B $ holds: by property (\ref{epsilon_interval}) for $J_{s_{w}}$ and Theorem 2-19 in \cite{Gug}, it follows that the corresponding sub-curves $I_{A}^{k}$ and $I_{B}^{k}$
satisfy, under the identification $I_{A}\equiv I_{B},$ the inclusion $I_{B}^{k}\subset I_{A}^{k}$ which implies that $I^k_B \subset N\cap I_B .$  However, in view of the generalization in Section \ref{sec4} below, we describe what needs to be done if the inclusion $I^k_B \subset N\cap I_B $
were not valid.}
%%%%%%%%%%%%%%%%%%%%%%%%%%%%%%%%%%%%%%%%%%%%%%%%%%%%%%%%%
%%%%%%%%%%%%%%%%%%%%%%%%%%%%%%%%%%%%%%%%%%%%%%%%%%%%%%
$I^k_B \subset N\cap I_{\Sigma}$ we similarly define $\Sigma_{B}^{k}$ to be the compact region bounded by the simple closed
curve $\left[   \partial \left(N\cap \Sigma_{B}\right)  \setminus I_{B}^{k}\right]  \cup P_{B}^{k}.$

If not, that is, if $w^{2k+1}_B \notin N\cap I_{\Sigma}$ (we work similarly in the case $w^{1}_B \notin N\cap I_{\Sigma}$) we may choose a sequence
$\left\{z^k \right\} \subset \partial  \left( N\cap \Sigma_{B}\right)\setminus I_{B}$ such that
\[\lim_{k\rightarrow \infty} \left\Vert z^k - w^{2k+1}  \right\Vert = 0\]
and use the segment $\left[ z^k , w^{2k+1}  \right]$ to define the region $\Sigma_{B}^{k}$ (marked in red in Figure \ref{NN_Polygonal}).

Moreover, by (\ref{length_appr2}) and (\ref{gin}) and for $k$ large enough we may assume that  $I_B^k \subset I_{s_w}$ and, hence, (see property (\ref{epsilon_interval})) we have
\begin{equation}
 \left \vert \kappa_{A}\left(  p\right)  \right \vert -\kappa_{B}\left(q \right)  >\varepsilon
,\mathrm{\  \ for\ all\ }p\in I_{A}, q\in I_B .
\label{epsilon_interval_2}
\end{equation}
%If and similarly $\Sigma_{B}^{k}$ be the compact (Euclidean) region bounded by the simple closed curve $\left[  \left(  \partial N\cap \Sigma_{B}\right)  \setminus I_{B}^{k}\right]  \cup P_{B}^{k}.$ Note that, by (k1) and (k2),
%\begin{equation}N\cap \Sigma_{A}\subset \Sigma_{A}^{k}\mathrm{\ and\ }\overline {\Sigma}_{B}^{k}\nearrow N\cap \Sigma_{B}. \label{ascent}\end{equation}
Let $\Sigma^{k}$ be the space obtained by gluing $\Sigma_{A}^{k}$
with $\Sigma_{B}^{k}$ along their boundary pieces $P_{A}^{k}$ and
$P_{B}^{k}$ respectively. Let $P^{k}\subset \Sigma^{k}$ be the curve obtained
by the identification $P_{A}^{k}\equiv P_{B}^{k}.$

First observe that property (a2) of Proposition \ref{basicLemma} is satisfied, that is, for
every $x\in N$ there exists $K=K(x)\in \mathbb{N}$ such that
$x\in \Sigma^{k}$ for all $k\geq K:$ if $x\in N\cap \Sigma_{A}$ then, by property (k1) we have \[N\cap \Sigma_{A}\subset \Sigma_{A}^{k} \]
which implies that $x\in \Sigma^k $ for all $k.$ If $x\in \left(N\cap \Sigma_{B}\right)  \setminus I_{\Sigma}$ it clearly has finite distance from  $N\cap I_{B}.$ Thus, for sufficiently large $k$ or, equivalently, for sufficiently small side-length $\ell_k$ of $P_B^k$ the point $x$ belongs to $\Sigma_{B}^{k}\subset\Sigma^{k} .$

We next check property (a1) of Proposition \ref{basicLemma}, that is, $\Sigma^{k}$ is a $CAT(0)$ space. In
fact, we only need to check that the angle around each vertex $v$ of $P^{k}$
is $\geq2\pi.$ If $v_{A}$ and $v_{B}$ are the vertices in $P_{A}^{k}$ and
$P_{B}^{k}$ respectively corresponding to $v,$ denote by $\widehat{v_{A}}$ and
$\widehat{v_{B}}$ the corresponding angles in $\Sigma_{A}%
^{k}$ and $  \Sigma_{B}^{k}$ resp.
The curvature of $I_A$ viewed as the boundary of $\left( \Sigma_A\right)^c$ is positive and equal to $-\kappa_A >\kappa_B .$ The angle of  $P_A^k\subset\left(\Sigma_A\right)^c$ subtended at $v_A$ is equal to $2\pi-\widehat{v_{A}}.$ Lemma \ref{downup} implies that  $2\pi-\widehat{v_{A}}<\widehat{v_{B}},$ thus, $\widehat{v_{A}}+\widehat{v_{B}}>2\pi$ and $\Sigma^{k}$ is a $CAT(0)$ space.

%For any two points $x,y\in \Sigma$ we denote by $\left[  x,y\right]  $ the unique geodesic in $\Sigma$ joining them and by $\left \vert x-y\right \vert $ their distance with respect to the metric of $\Sigma.$ Moreover, for $k$ sufficiently large, by property (\ref{ascent}) we may view $x,y$ as points in $\Sigma^{k}.$ So their distance in $\Sigma_{k},$ denoted by $\left \vert x-y\right \vert _{k}$ makes sense and we denote by $\left[  x,y\right]  _{k}$ the unique geodesic in $\Sigma_{k}$ joining them.

\underline{CLAIM }: The sequence $\Sigma^{k}$ satisfies property (a3), that
is, for any $x,y\in N,$ $\left \vert x-y\right \vert _{k}\rightarrow$
$\left \vert x-y\right \vert $ as $k\rightarrow \infty.$

We examine in detail the case $x\in N\cap \Sigma_{A},y\in N\cap \Sigma_{B}.$ The geodesic $\left[  x,y\right]  _{k}$ intersects $P^{k}$ at a single point, say $z_{k}.$ The sequence $\left \{  z_{k}\right \}  \subset \Sigma_{B}$ must have an accumulation point $z_{0}$ which, by construction of the polygonal lines $P_A^k, P_B^k,$
necessarily belongs to $N\cap I_{\Sigma}.$ It suffices to show that $\left[
x,y\right]  \cap I_{\Sigma}=\left \{  z_{0}\right \}  .$

Suppose, on the contrary, that $\left[  x,y\right]  \cap I_{\Sigma}=\left \{
z_{0}^{\prime}\right \}  $ for some $z_{0}^{\prime}\neq z_{0}.$ By uniqueness
of geodesics, see Proposition \ref{ugl}, we must have%
\begin{equation}
\left \vert x-y\right \vert =\left \vert x-z_{0}^{\prime}\right \vert +\left \vert
z_{0}^{\prime}-y\right \vert \lneqq \left \vert x-z_{0}\right \vert +\left \vert
z_{0}-y\right \vert \label{ineq}%
\end{equation}
By construction of the sequence $\left \{  z_{k}\right \}  $ we have that
\begin{equation}
L\left(  \left[  x,z_{k}\right]  _{k}\cup \left[  z_{k},y\right]  _{k}\right)
=\left \vert x-z_{k}\right \vert _{k}+\left \vert z_{k}-y\right \vert
_{k}\rightarrow \left \vert x-z_{0}\right \vert +\left \vert z_{0}-y\right \vert
\label{ineq2}%
\end{equation}
The curve $\left[  x,z_{0}^{\prime}\right]  _{k}\cup \left[  z_{0}^{\prime
}y\right]  _{k}$ in $\Sigma^{k}$ intersects $P^{k}$ at a single point, say,
$\left(  z_{0}^{\prime}\right)  _{k}.$ Similarly, we have
\begin{equation}
L\Bigl(\left[  x,\left(  z_{0}^{\prime}\right)  _{k}\right]  _{k}\cup \left[
\left(  z_{0}^{\prime}\right)  _{k},y\right]  _{k}\Bigr)=\left \vert x-\left(
z_{0}^{\prime}\right)  _{k}\right \vert _{k}+\left \vert \left(  z_{0}^{\prime
}\right)  _{k}-y\right \vert _{k}\rightarrow \left \vert x-z_{0}^{\prime
}\right \vert +\left \vert z_{0}^{\prime}-y\right \vert \label{ineq3}%
\end{equation}
It follows that the length of the geodesic $\left[  x,y\right]  _{k}=\left[
x,z_{k}\right]  _{k}\cup \left[  z_{k},y\right]  _{k}$ approaches, as
$k\rightarrow \infty,$ the RHS of inequality (\ref{ineq}) and the length of the
curve $\left[  x,\left(  z_{0}^{\prime}\right)  _{k}\right]  _{k}\cup \left[
\left(  z_{0}^{\prime}\right)  _{k},y\right]  _{k}$ approaches the LHS of
inequality (\ref{ineq}). For some $k$ sufficiently large, this contradicts the
fact that $\left[  x,z_{k}\right]  _{k}\cup \left[  z_{k},y\right]  _{k}$ is
the geodesic $\Sigma^{k}$ joining $x$ and $y.$ The case $x,y\in N\cap
\Sigma_{A}$ and $\left[  x,y\right]  $ intersects $I_{\Sigma}$ at two points
is treated by analogous arguments. This completes the proof of CLAIM.

We may now apply Proposition \ref{basicLemma} for the sequence $\Sigma^k$ to complete the proof in Case 1 of the Theorem.

\begin{center}
\begin{figure}[h]
\hspace*{7mm}
\includegraphics[scale=0.99]{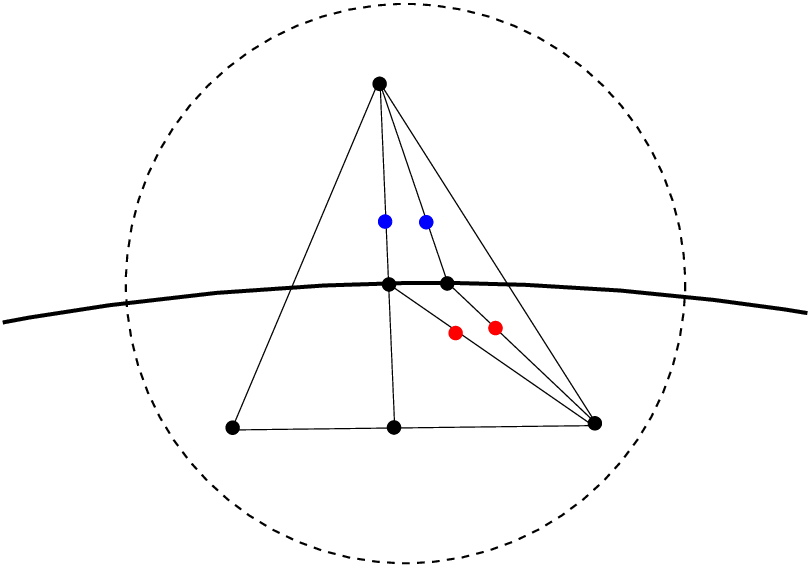}
%\\vspace*{-8mm}
\newline
\begin{picture}(22,12)
\put(196,157){$w$}\put(220,157){$w_k$}
\put(212,72){$u$}\put(199,252){$x$}
\put(308,75){$z$}\put(126,72){$y$}
\put(194,179){$\color{blue} A$}\put(230,177){$\color{blue} A_k$}
\put(235,113){$\color{red} B$}\put(261,134){$\color{red} B_k$}
\put(370,170){$\Sigma_A$}
\put(370,90){$\Sigma_B$}
\put(12,130){$I_{\Sigma}$}
\end{picture}
\caption{The triangles involved in the Proof of Case 2.}
\label{Fcat_last}
\end{figure}
\end{center}

\noindent \underline{Proof for Case 2:} Let the center $w$ of $N$ be a point $w=\sigma(t_{w})$ for some $t_w \in J$ such that
\[\kappa_A (t_w) +\kappa_B (t_w)=0.\]
We may assume that $w$ is the unique point in $N\cap I_{\Sigma}$ where the above equality occurs, that is, for every point $\sigma(s)\in N\cap I_{\Sigma}, s\neq t_w$ we have
\[\kappa_A (s) +\kappa_B (s)<0.\]
We will show that an arbitrary geodesic triangle $T(x,y,z)$ in $N$ is thin. If $w$ is not contained in the interior of $T\cup \mathrm{Int}(T)$ we may find a neighborhood $N_1$ containing $T$ such that for all points $\sigma(s)\in N_1\cap I_{\Sigma}$  we have
\[\kappa_A (s) +\kappa_B (s)<0.\]
Then the argument given in Case 1 applies to $N_1$ showing that $N_1$ is a $CAT(0)$ space and, thus, $T$ is thin.

Assume now that $w$ is contained in the interior of $T.$ The case $w$ belongs to one of the three sides of $T$ is a special case and is covered by the proof we give below.

Without loss of generality, we assume that $x\in N\cap\Sigma_A$ and $y,z\in N\cap \Sigma_B .$ As $w$ is the interior of $T,$ the extension of the geodesic segment $[x,w]$ must intersect $[y,z]$ at a point, say, $u.$

It suffices to show that the triangle $T\left( x,w,z \right)$ is a $CAT(0)$ space. Assuming this, observe that the triangle
$T\left( u,w,z \right)$ is Euclidean and thus the gluing of $T\left( x,w,z \right)$ with $T\left( u,w,z \right)$ along $[w,z]$ is again a $CAT(0)$ space. That is, the triangle $T\left( x,u,z \right)$ is a $CAT(0)$ space. Similarly, the triangle $T\left( x,u,y \right)$ is $CAT(0)$ and so is $T(x,y,z)$ because
\[T(x,y,z) =T\left( x,u,y \right) \cup_{[x,u]} T\left( x,u,z \right) \]
We conclude the proof of Case 2 of the Theorem by showing that the triangle with vertices $ x,w,z$ is thin.

Assume, on the contrary, that there exist points $A\in [x,w], B\in[w,z]$ such that for the comparison triangle with vertices $\overline{x},\overline{w},\overline{z}$ we have
\begin{equation}
 \left| A-B\right| > \left \Vert \overline{A} -\overline{B} \right \Vert \label{last_cat1}
 \end{equation}
where $\overline{A} , \overline{B}$ are the corresponding points for $A,B$ respectively (the case $A\in [x,w], B\in[x,z]$ is treated similarly).

Pick a sequence of points $\left\{ w_k\right\}_{k\in\mathbb{N}}$ with the properties
\[ \left\{ w_k \right\}\subset T\left( x,w,z \right)\cap I_{\Sigma} \mathrm{\ \ and\ \ } w_k\longrightarrow w,
w_k\neq w.\]

For each $k,$ form the triangle $T_k\equiv T_k\left( x,w_k,z \right)$ which does not contain $w$ and hence, as explained above, $T_k$ is thin. Let $A_k$ (resp. $B_k$) be the unique point on the segment $\left[ x,w_k \right]$ (resp. $\left[ w_k ,z \right]$) with the property $\left| w_k-A_k\right| =\left| w-A\right|  $
$\big($resp. $\left| w_k - B_k\right| =\left| w-B\right|  \big).$
Moreover, $A_k\rightarrow A$ and $ B_k\rightarrow B,$ hence
\begin{equation}
\left| A_k-B_k\right| \longrightarrow\left| A-B\right| . \label{last_cat2}
\end{equation}
As a Euclidean triangle is determined by the lengths of its sides, it is easy to see that
\begin{equation}
 \Vert \overline{A}_k -\overline{B}_k \Vert\longrightarrow\Vert \overline{A}-\overline{B} \Vert \label{last_cat3}
\end{equation}
For large enough $k,$ inequality (\ref{last_cat1}) along with (\ref{last_cat2}) and (\ref{last_cat3}) imply
\[  \left| A_k-B_k\right| > \left \Vert \overline{A}_k -\overline{B}_k \right \Vert  \]
which contradicts the fact that the triangle $T_k$ is a thin.\\
This completes the proof in Case 2.

\noindent \underline{Proof for Case 3:} In this case the proof is almost identical with the proof given in Case 2: consider an arbitrary geodesic triangle $T\equiv T(x,y,z).$ Clearly, $w$ cannot belong to the interior of $T,$ thus, either $w$ belongs to a side of $T$ or, we may find a neighborhood $N_1$ containing $T$ such that for all points $\sigma(s)\in N_1\cap I_{\Sigma}$  we have
\[\kappa_A (s) +\kappa_B (s)<0.\]
In the latter case the proof given in Case 1 suffices and in the former case the proof that $T$ is thin is identical with the proof, given in Case 2, that the triangle $T\left( x,u,z \right) $ is thin.
\end{proof}
In the following example we exhibit the necessity of assumption \textbf{(k3)} for the validity of Theorem \ref{main}.
%\\[3mm]
\begin{center}
\begin{figure}[h]
\hspace*{23mm}
\includegraphics[scale=1.49]{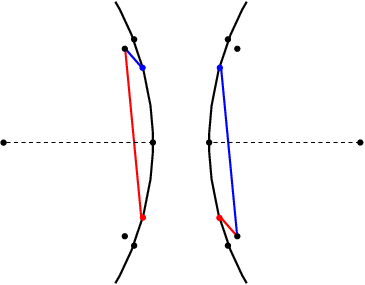}
%\\vspace*{-8mm}
\newline
\begin{picture}(22,12)
\put(243,185){$x_B$}\put(143,185){$x_A$}
\put(144,52){$y_A$}\put(243,52){$y_B$}
\put(52,116){$O_A$}\put(332,116){$O_B$}
\put(86,52){$D_A$}\put(298,52){$D_B$}
\put(182,117){$w$}\put(208,117){$w$}
\put(170,193){$w_1$}\put(216,193){$w_1$}
\put(170,42){$w_2$}\put(217,42){$w_2$}
\put(174,173){$\color{blue} z_1$}\put(214,173){$\color{blue} z_1$}
\put(214,62){$\color{red} z_2$}\put(174,62){$\color{red} z_2$}
\end{picture}
\caption{Notation of the Example.}
\label{figex}
\end{figure}
\end{center}
\textbf{Example.}
Consider two copies $D_A,D_B$ of an Euclidean disk of radius $R.$ Clearly $\kappa_A + \kappa_B >0$ at every boundary point and by choosing $R$ large enough we can make the  sum $\kappa_A + \kappa_B $ arbitrarily close to $0.$ Let $I_A$ and $I_B$ be subsets of $\partial D_A$ and $\partial D_B$ respectively such that  $I_A$ and $I_B$ have the same length. As above,
we may glue $D_{A}$ with $D_{B}$ along their boundary pieces $I_{A}\equiv I_{B}$ to form a connected surface $\Sigma$
\[
\Sigma:= D_{A}\cup_{I_{A}\equiv I_{B}}D_{B}.
\]
It is easy to see that $\Sigma$ is not a $CAT(0)$ space. In fact, if $O_A,O_B$ are the centers of the disks $D_A,D_B$ respectively, then for any point $w\in I$ the union
\[ [O_A , w]\cup [w, O_B] \]
is a geodesic from $O_A$ to $O_B.$

Moreover, $\Sigma$ is not even locally uniquely geodesic. To see this, let $\cal{U}$ be an arbitrarily small neighborhood of a point $w\in I. $ Pick points $w_1,w_2 \in I\cap \cal{U}$ such that $w,w_1,w_2$ form an isosceles triangle with $\left\vert w-w_1\right\vert = \left\vert w-w_2\right\vert.$ Let $\varepsilon$ be the positive number such that
\begin{equation}
\left\vert w-w_1\right\vert + \left\vert w-w_2\right\vert=\varepsilon + \left\vert w_1-w_2\right\vert .\label{examplee}
\end{equation}
Pick and fix points $x_A,y_A\in D_A\cap\cal{U}$ which are symmetric with respect to the ray $[O_A ,w]$ such that
\begin{equation}
\left\vert x_A-w_1\right\vert = \frac{\varepsilon}{4} =\left\vert y_A-w_2\right\vert .\label{examplee2}
\end{equation}
All the above notation is gathered in Figure \ref{figex} above.
Define the continuous function
\[f:I\rightarrow \mathbb{R} : f(z) =\left\vert x_A-z\right\vert +\left\vert z- y_A\right\vert .\]
By (\ref{examplee},\ref{examplee2}), the value $f(w)=\left\vert w-w_1\right\vert + \left\vert w-w_2\right\vert$ is strictly larger than both $f(w_1)$ and $f(w_2).$ It follows that $f$ attains its minimum at a point $z_1 \neq w$ and by symmetry there also exists a point $z_2$ such that the number
\[ \left\vert x_A-z_1\right\vert + \left\vert z_1-y_A\right\vert
= \left\vert x_A-z_2\right\vert + \left\vert z_2-y_A\right\vert \]
is the minimum amongst the lengths of all piece-wise geodesics $[x_A ,z]\cup [z,y_A]$ where $z\in I.$

Consider the corresponding points $x_B, y_B \in D_B .$ Then for the points $x_A , y_B\in \Sigma$ there exists two distinct geodesics joining them, namely,
\[ [x_A, z_1] \cup [z_1,y_B]  \textrm{\ \ and\ \ } [x_A, z_2] \cup [z_2,y_B].\]

\section{Generalization to Riemann Surfaces\label{sec4}}

Instead of $\mathbb{R}^{2}$ we consider a simply connected Riemannian
surface $X$ of class $C^{\infty }$ and of curvature $k\leq 0.$ We also
assume that $X$ is geodesically complete. By Hopf-Rinow theorem the geodesic
completeness of $X$ is equivalent to the fact that $X$ is complete as metric
space and to the fact that each closed and bounded subset $X$ is compact.

In what follows by $\nabla $ will denote the Riemannian connection and by $%
\langle \cdot $ $,\cdot \rangle $ the Riemannian metric of $X.$

Let $r(s),$ $s\in (a,b)$ be a smooth curve of $X$ i.e. of class $C^{\infty }$
parametrized by arc length. Let $T(s)=r^{\prime }(s)$ be the unit tangent
vector of $r.$ The vector field $\left(\nabla _{T}T\right)(s)$ is normal to $T(s)$ and
the curvature $k(s)$ of $r$ at the point $r(s)$ is defined by
\[
k(s)=\left\vert\left(\nabla _{T}T\right)(s)\right\vert=\sqrt{\langle
\left(\nabla _{T}T\right)(s), \left(\nabla _{T}T\right)(s)
\rangle }.
\]

Let $\Sigma $ be a $2-$dimensional connected complete sub-manifold of $X$
whose boundary $\partial \Sigma $ consists of finitely many components each
being a piece-wise smooth curve in $X.$ Equip $\Sigma $ with the induced
from $X$ length metric. The topology with respect to the induced length
metric is equivalent to the relative topology from $X.$ This follows from
the fact that any two points in $\partial \Sigma $ have finite distance. It
follows that $\Sigma $ with the induced length metric is complete and
locally compact, hence, a geodesic metric space. We will be calling such a
space a \emph{domain} in $X.$

Below we need the following which is Theorem 1.3 in \cite{LyWe}:
\begin{theorema}
 Let $Z$ be a geodesic metric space homeomorphic to the closed disk. Denote by $\partial Z$ the boundary circle and assume that $Z\setminus \partial Z$ is a locally $CAT(0)$ space. Then the following are equivalent:\\
 (1) $Z$ is $CAT(0)$\\
 (2) $Z\setminus \partial Z$ with the metric induced from $Z$ is a length space.
\end{theorema}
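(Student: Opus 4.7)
The plan is to establish the two implications separately, with $(2)\Rightarrow(1)$ being the substantial direction.

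For $(1)\Rightarrow(2)$, assume $Z$ is $CAT(0)$. Then $Z$ is uniquely geodesic and the distance function is convex on pairs of geodesics. Given $x,y\in Z\setminus\partial Z$, the $Z$-geodesic $\gamma=[x,y]$ realizes $d_Z(x,y)$ but may graze $\partial Z$. To show the induced metric on $Z\setminus\partial Z$ is a length metric, I would produce, for arbitrary $\varepsilon>0$, a path in $Z\setminus\partial Z$ from $x$ to $y$ of length at most $d_Z(x,y)+\varepsilon$. Fix an interior basepoint $p_0$ and consider, for small $t>0$, the path $\gamma_t(s)$ obtained by moving each $\gamma(s)$ a distance $t$ along the unique geodesic $[\gamma(s),p_0]$. $CAT(0)$ convexity of the distance function controls $L(\gamma_t)-L(\gamma)$ linearly in $t$, while continuity and openness of $Z\setminus\partial Z$ force $\gamma_t$ into the interior for $t$ small. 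Connecting $x$ and $y$ to the endpoints of $\gamma_t$ by short segments that lie interior (which exist by the hypothesis that $x,y\in Z\setminus\partial Z$) yields the required approximating path.

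For $(2)\Rightarrow(1)$, the strategy is to invoke the Cartan--Hadamard--Alexandrov globalization theorem: since $Z$ is a complete geodesic space homeomorphic to a disk, hence simply connected, it suffices to show $Z$ is locally $CAT(0)$. Interior points already admit $CAT(0)$ neighborhoods by hypothesis, so the work is at boundary points $p\in\partial Z$. Take a small ball $B=B(p,r)\subset Z$ and verify the $CAT(0)$ four-point comparison for an arbitrary geodesic triangle $T=T(x,y,z)$ in $B$, whose vertices and sides may touch $\partial Z$. Choose sequences $x_n,y_n,z_n\in Z\setminus\partial Z$ converging to $x,y,z$ respectively. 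Hypothesis (2), that the induced metric on $Z\setminus\partial Z$ is a length metric, is used to construct, for each $n$, approximating sides that lie in the interior and whose lengths converge to the side lengths of $T$; the corresponding triangles $T_n$ then sit inside regions of $Z\setminus\partial Z$ that are $CAT(0)$, and so satisfy the comparison inequalities. A limiting argument identical in structure to Proposition \ref{basicLemma} above then passes the $CAT(0)$ inequality to $T$.

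The main obstacle is the boundary step of $(2)\Rightarrow(1)$: one has to guarantee that the interior approximating paths can be chosen so that their lengths converge to the $Z$-distances between boundary-adjacent points, and that geodesics between interior approximations converge in the appropriate sense to a geodesic of $T$. Hypothesis (2) is exactly what provides this bridge, ensuring $d_Z$ on the interior agrees with the interior length metric and so interior $CAT(0)$ data survive the limit. Without (2), the induced length metric could strictly dominate $d_Z$ on the interior, and the comparison inequalities obtained for the $T_n$ would not descend to $T$.
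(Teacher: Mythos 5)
First, a point of comparison: the paper does not prove this statement at all --- it is quoted verbatim as Theorem 1.3 of \cite{LyWe} and used as a black box --- so your proposal has to be measured against the actual difficulty of the result rather than against an internal argument. Your sketch has the right overall shape (an easy direction plus a globalization argument for the converse), but the decisive step of $(2)\Rightarrow(1)$ is asserted rather than proved, and the assertion is circular. To verify the comparison inequality for a triangle $T$ in a small ball about a point of $\partial Z$, you approximate $T$ by triangles $T_n$ with interior vertices and claim that the $T_n$ ``sit inside regions of $Z\setminus\partial Z$ that are $CAT(0)$, and so satisfy the comparison inequalities.'' The hypothesis only gives that $Z\setminus\partial Z$ is \emph{locally} $CAT(0)$; a subregion of the interior abutting $\partial Z$ and containing a triangle of definite size need not be globally $CAT(0)$, and Cartan--Hadamard cannot be applied to produce this because $(Z\setminus\partial Z,d_Z)$ is not complete --- completing it reintroduces exactly the boundary points whose comparison property is in question. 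Showing that interior triangles near $\partial Z$ do satisfy the comparison is essentially the content of the theorem, not a step in its proof; in \cite{LyWe} it is obtained through the isoperimetric characterization of upper curvature bounds and the solution of the Plateau problem in metric spaces, which is precisely why the present paper imports the statement instead of proving it. Note also that an appeal to Proposition \ref{basicLemma} fails at hypothesis (a1), which requires each approximating space to be globally $CAT(0)$.

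The direction $(1)\Rightarrow(2)$ is indeed the easy one, but your specific deformation does not work as stated: the geodesic $[\gamma(s),p_0]$ from a boundary point to an interior basepoint may travel \emph{inside} $\partial Z$ for a positive initial time (this already happens for the non-convex planar domains of Section \ref{sec3}, where geodesics hug concave boundary arcs), so the claim that ``continuity and openness of $Z\setminus\partial Z$ force $\gamma_t$ into the interior for $t$ small'' is false in general. The conclusion of this direction can be salvaged by a different pushing-in argument, but the step as written is a genuine gap rather than a routine verification.
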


\begin{proposition}
A domain $\Sigma $ in $X$ is a locally $CAT(0)$ space.
\end{proposition}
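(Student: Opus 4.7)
My plan is to reduce to Theorem A by finding, around each point of $\Sigma$, a neighborhood $Z$ homeomorphic to a closed disk to which Theorem A applies.

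For an interior point $p\in \Sigma\setminus \partial\Sigma$, the argument is immediate. Since $X$ is simply connected, geodesically complete, and of curvature $k\leq 0$, by the Cartan--Hadamard theorem $X$ itself is a $CAT(0)$ space. Pick a small geodesic ball $B_X(p,r)\subset X$ whose closure is contained in the interior of $\Sigma$ and which is convex in $X$; such a ball exists since $p$ lies in an open subset of $X$ and small enough balls in a Hadamard manifold are convex. On such a ball, the induced length metric from $\Sigma$ agrees with the Riemannian distance, and a convex subset of a $CAT(0)$ space is itself $CAT(0)$.

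For a boundary point $p\in \partial\Sigma$, I would choose a radius $r>0$ small enough so that $Z:=\overline{B_{\Sigma}(p,r)}$ is homeomorphic to a closed half-disk (and hence to a closed disk), with topological boundary circle $\partial Z$ consisting of a piece of $\partial\Sigma$ and a piece of the metric sphere around $p$ in $\Sigma$. This is possible because $\partial \Sigma$ is piecewise smooth, so locally around $p$ the manifold-with-boundary structure is standard. Then $Z$ is a geodesic metric space, being a closed ball in the complete locally compact geodesic space $\Sigma$. I would like to apply Theorem A to $Z$, which requires two conditions:
\begin{itemize}
\item $Z\setminus \partial Z$ is locally $CAT(0)$: this set consists of interior points of $\Sigma$, each of which, by the interior case above, has a convex $CAT(0)$ neighborhood inside $X$ that sits inside $Z\setminus \partial Z$.
\item $Z\setminus \partial Z$ with the metric induced from $Z$ is a length space: given two interior points $x,y\in Z\setminus \partial Z$ and any $\eta>0$, I need to produce a path in $Z\setminus \partial Z$ of length at most $d_Z(x,y)+\eta$.
\end{itemize}

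The hard step will be verifying the length space condition. A geodesic $\gamma$ in $Z$ joining $x,y$ may be tangent to, or travel along, a smooth piece of $\partial\Sigma$, and we must approximate $\gamma$ by curves avoiding $\partial\Sigma$ without increasing the length too much. Using the piecewise smoothness of $\partial\Sigma,$ on each smooth arc one can push $\gamma$ off $\partial\Sigma$ along the interior unit normal by a distance $\varepsilon$ via the exponential map of $X$; because $X$ is smooth, the first variation formula shows that the resulting curve has length $L(\gamma)+O(\varepsilon)$, with the $O(\varepsilon)$ term controlled by the second fundamental form of $\partial \Sigma$ and the geometry of $X$. At the finitely many non-smooth corners of $\partial\Sigma$ meeting $\gamma$, one rounds off inside a ball of radius $\varepsilon$ centered at the corner, introducing only $O(\varepsilon)$ extra length. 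By choosing $\varepsilon$ small enough the total length increase is less than $\eta$, giving the required approximation. Once both hypotheses are verified, Theorem A yields that $Z$ is $CAT(0)$, completing the argument.
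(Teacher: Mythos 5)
Your proof follows the paper's approach exactly: interior points are handled via small balls in the nonpositively curved (Cartan--Hadamard) ambient surface $X$, and boundary points by applying Theorem A of Lytchak--Wenger to a disk neighborhood $Z$. In fact you go further than the paper, which simply takes $Z=V\cap\Sigma$ for a small ball $V\subset X$ and asserts that Theorem A applies; your explicit verification of the length-space hypothesis (pushing approximating curves off $\partial\Sigma$ along the inward normal and rounding corners) fills in a step the paper leaves implicit, though for completeness the approximating paths must be kept away from all of $\partial Z$, i.e.\ also from the outer spherical arc of the disk, not only from the part of $\partial Z$ lying on $\partial\Sigma$.
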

\begin{proof}
 For any $x\in \Sigma\setminus\partial\Sigma$ we can clearly find a ball $V\subset X$ centered at $x$ with $V\subset \Sigma\setminus\partial\Sigma . $ Then $V$ is $CAT(0)$ because $X$ is of curvature $k\leq 0.$

 If $x\in\partial\Sigma$ then for a ball $V\subset X$ centered at $x,$ Theorem A above applies to $Z=V \cap \Sigma.$
\end{proof}
\subsection{The sign curvature and its geometric meaning\label{scurv}}

For $s\in (a,b)$ a positive number $\delta _{s}>0$ can be defined such that:
if $t\in (-\delta _{s},\delta _{s})$ then $s+t\in (a,b).$ Subsequently, for
a fixed $s,$  an auxiliary curve in the tangent space $T_{r(s)}X$
$$\gamma :(-\delta _{s},\delta_{s})\rightarrow T_{r(s)}X$$
can be defined as follows:
\[
\gamma (t)=\exp ^{-1}(r(s+t)),\text{ }t\in (-\delta _{s},\delta _{s}).
\]%
Let $T_{s}$ be the tangent vector to $r$ at the point $r(s).$ In Lemma 3.1
of \cite{LMM} it is proven that
\[
\gamma ^{\prime }(0)=T_{s}\text{ and }\gamma ^{\prime \prime }(0)=(\nabla
_{T}T)_{s}.
\]

We assume now that the smooth curve $r$  appears as the boundary (or as a
piece of the boundary) of a domain $\Sigma $ in $X$ and let $r(s),$ $s\in
(a,b)$ be a parametrization of $r$ by arc length. Then we may define
naturally the sign curvature of $r$ as follows: to each point $r(s)$ we
consider the unit normal vector $n(s)$ to the curve $r$ which is directed
inward the domain $\Sigma .$ Then it will be
\[
(\nabla _{T}T)_{s}=\overline{k}(s)n(s)
\]%
where $\overline{k}(s)$ is a function defined for each $s\in (a,b)$ and
which can take any real value. The function $\overline{k}(s)$ will be referred to as signed curvature of $r.$

We will show the following result which is well-known in the case of $%
\mathbb{R}^{2}$ equipped with its usual inner product and connexion.

\begin{proposition}\label{innersegment}
Let $r$ be a piece of the boundary of a domain $\Sigma $ in $X$ and let $r(s),$ $s\in (a,b)$ be a parametrization of $r$ by arc length. Let $s_{0}\in
(a,b)$ such that the signed curvature $\overline{k}(s_{0})>0.$ Then we may
find a neighborhood $(s_{0}-\varepsilon ,s_{0}+\varepsilon ),$ $\varepsilon
>0$ of $s_{0}$ in $(a,b)$ such that for every two points $s_{1},$ $s_{2}$ in
$(s_{0}-\varepsilon ,s_{0}+\varepsilon )$ the unique geodesic of $X$
joining $r(s_{1})$ and $r(s_{2})$ is lying in $\Sigma .$
\end{proposition}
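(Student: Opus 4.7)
The plan is to argue by contradiction using the Gauss--Bonnet theorem applied to a thin region lying in $X\setminus\Sigma$. If the conclusion fails, pick sequences $s_1^n\neq s_2^n$ with $s_i^n\to s_0$ such that the (unique) $X$-geodesic $g_n$ joining $r(s_1^n)$ and $r(s_2^n)$ is not entirely contained in $\Sigma$. By continuity of $\overline{k}$ and the assumption $\overline{k}(s_0)>0$, fix $\varepsilon_0>0$ and $c>0$ with $\overline{k}(s)\ge c$ on $(s_0-\varepsilon_0,s_0+\varepsilon_0)$, and discard initial terms so that $s_1^n,s_2^n$ lie in this interval. Since $X$ is simply connected, complete, and of non-positive curvature, Cartan--Hadamard gives that $\exp_{r(s_0)}$ is a diffeomorphism; moreover, for $n$ large the whole $g_n$ sits inside a fixed small convex geodesic ball $B$ around $r(s_0)$, in which $X$-geodesics are unique and meet $r$ at only finitely many points.

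Since $X\setminus\Sigma$ is open, the set of parameter times at which $g_n$ lies in $X\setminus\Sigma$ is a union of open intervals; pick one such maximal subinterval, whose endpoints map under $g_n$ to two points of $\partial\Sigma$. For $n$ large these endpoints lie on the smooth arc $r$, so (after relabelling) $P_i^n=r(t_i^n)$ with $t_1^n<t_2^n$ and $t_i^n\to s_0$. The corresponding sub-arc $\widetilde g_n$ of $g_n$, together with $r|_{[t_1^n,t_2^n]}$, bounds a compact topological disk $R_n\subset\overline{X\setminus\Sigma}$. Applying the Gauss--Bonnet formula to $R_n$ yields
\begin{equation*}
\int_{R_n} K\, dA \,+\, \int_{\partial R_n} k_g\, ds \,+\, (\pi-\theta_1^n)+(\pi-\theta_2^n) = 2\pi,
\end{equation*}
where $K$ is the Gaussian curvature of $X$, $k_g$ is the geodesic curvature of $\partial R_n$ taken with respect to the inward normal of $R_n$, and $\theta_i^n\in[0,\pi]$ are the interior angles of $R_n$ at the vertices $P_i^n$. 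On $\widetilde g_n$ one has $k_g=0$; on $r|_{[t_1^n,t_2^n]}$ the inward normal to $R_n$ is $-n(s)$, opposite to the inward normal of $\Sigma$, so $k_g=\langle\nabla_T T,-n\rangle=-\overline{k}(s)$ and the identity becomes
\begin{equation*}
\int_{R_n} K\, dA \,-\, \int_{t_1^n}^{t_2^n}\overline{k}(s)\, ds \,+\, (\pi-\theta_1^n)+(\pi-\theta_2^n) = 2\pi.
\end{equation*}

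The hypothesis on $X$ gives $K\le 0$, so the first term is $\le 0$; the lower bound $\overline{k}(s)\ge c>0$ on $[t_1^n,t_2^n]$ with $t_1^n<t_2^n$ makes the second term strictly negative; and $(\pi-\theta_1^n)+(\pi-\theta_2^n)\le 2\pi$ in all cases. Hence the left-hand side is strictly less than $2\pi$, a contradiction. The principal technical obstacle is to guarantee that $R_n$ is a genuine topological disk whose boundary consists of exactly the two simple arcs $\widetilde g_n$ and $r|_{[t_1^n,t_2^n]}$ meeting only at $P_1^n,P_2^n$; this is arranged by working inside the small convex ball $B$ together with uniqueness of $X$-geodesics there, which for large $n$ rules out extra intersections. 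Degenerate tangential crossings (where some $\theta_i^n=0$) do not spoil the argument, because the strict negativity of $-\int_{t_1^n}^{t_2^n}\overline{k}(s)\,ds$ by itself already forces the left-hand side below $2\pi$ even when both angle deficits $\pi-\theta_i^n$ saturate at $\pi$.
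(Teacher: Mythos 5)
Your argument is correct, but it is a genuinely different route from the paper's. The paper works infinitesimally in the tangent plane: it pulls $r$ back through $\exp^{-1}_{r(s_0)}$ to the curve $\gamma(t)=\exp^{-1}(r(s_0+t))$, invokes $\gamma''(0)=(\nabla_TT)_{s_0}=\overline{k}(s_0)\,n(s_0)$ (Lemma 3.1 of \cite{LMM}) to conclude that $\gamma$ is locally strictly convex toward the interior, and deduces that the angle $\measuredangle(n(s_0),\gamma(t))$ is less than $\pi/2$ for small $t$. Your Gauss--Bonnet contradiction on the exterior ``lens'' $R_n$ is more global on the chosen neighborhood and, in my view, gives a cleaner quantitative reason for the statement: the strictly positive total turning $\int\overline{k}\,ds$ of $r$ cannot be absorbed by the non-positive ambient curvature and the angle deficits, so no exterior lens can exist; it also handles chords between two points $s_1,s_2$ both different from $s_0$ directly, which the paper's sketch only addresses implicitly. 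The price is the planar-topology bookkeeping you flag: you need $R_n$ to be a Jordan domain lying on the \emph{exterior} side of $r$ (otherwise the inward normal along $r$ is $+n$ and the sign of $k_g$ flips, destroying the contradiction). This is worth one more line: since the interior of $\widetilde g_n$ lies in the open set $X\setminus\Sigma$ and $D\subset B$ where $\partial\Sigma\cap B=r\cap B$, the arc $r$ cannot enter the bounded component $D$ without crossing $\widetilde g_n$, which is impossible as $\widetilde g_n\setminus\{P_1^n,P_2^n\}$ avoids $\partial\Sigma$; hence $D\cap\partial\Sigma=\emptyset$ and $D$, containing exterior points near $\widetilde g_n$, lies entirely in $X\setminus\Sigma$. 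With that observation supplied, your proof is complete and, if anything, more robust than the one in the paper.
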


\begin{proof}
Since $\overline{k}(s_{0})>0$ it follows that the vectors $n(s_{0})$ and $%
(\nabla _{T}T)_{s_{0}}$ are pointing in the same direction. To prove our
result it suffices to prove that the angle $\measuredangle (n(s_{0}),$ $%
\gamma (t)),$ for $t\in (0,\epsilon (s_{0})),$ it is smaller than $\pi /2$
for a positive $\epsilon (s_{0})$ sufficiently small. By construction the
curve $\gamma (t)$ in $\mathbb{R}^{2}$ is convex in a neighborhood of $0.$
Therefore $\measuredangle (\gamma ^{\prime \prime }(0),\gamma (t))<\pi /2$
for positive $t$ belonging in a neighborhood of $0.$ Therefore, since $%
\gamma ^{\prime \prime }(0)=(\nabla _{T}T)_{s_{0}}$ and $n(s_{0})$ and $%
(\nabla _{T}T)_{s_{0}}$ are pointing in the same direction our result
follows.
\end{proof}

\subsection{The main result}

By means of the curve $\gamma $ defined above the following function $U(t)$
is defined:
\[U(t)= \begin{cases} \displaystyle
\frac{-\gamma (t)}{\left\Vert\gamma (t)\right\Vert}, & \text{if }t<0\\[4mm]
          T_{s}, & \text{ if }t=0 \\[4mm]
   \displaystyle       \frac{\gamma (t)}{\left\Vert\gamma (t)\right\Vert}, &\text{ if }t>0
		 \end{cases}  \]
Subsequently the following angles
\begin{align}
& \theta_{r(s)}^{+}(t)=\measuredangle (U(0),U(t))\textrm{ if } t>0
 \nonumber \\
 \textrm{and\ } &
\theta _{r(s)}^{-}(t)=\measuredangle (U(0),U(t))\textrm{ if } t<0
\nonumber
\end{align}
are defined. Obviously $\theta _{r(s)}^{+}(0)=\theta
_{r(s)}^{-}(0)=0.$ The following result is proven in Lemma 3.3. of \cite{LMM}%
.

\begin{lemma}
\label{basic1}(1) The function $\theta _{r(s)}^{+}(t),$ $t\in \lbrack 0,\delta _{s})$ is increasing and
\[\frac{d\theta _{r(s)}^{+} (t)}{dt}(0)=\frac{1}{2}k(s).\]
(2) The function $\theta _{r(s)}^{-}(t),$ $t\in (-\delta _{s},0]$ is decreasing and
\[\frac{d\theta _{r(s)}^{-} (t)}{dt}(0)=-\frac{1}{2}k(s)\]
\end{lemma}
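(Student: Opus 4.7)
My plan is to reduce everything to a Taylor expansion computation in the tangent space $T_{r(s)}X$, exploiting that $\gamma(t)=\exp^{-1}(r(s+t))$ is a smooth curve in this Euclidean vector space with explicitly known values $\gamma(0)=0$, $\gamma'(0)=T_s$, and $\gamma''(0)=(\nabla_T T)_s$. Differentiating the arc-length identity $\langle T,T\rangle\equiv 1$ gives $\langle (\nabla_T T)_s, T_s\rangle=0$, so writing $(\nabla_T T)_s=k(s)\,n(s)$ for a unit vector $n(s)\perp T_s$, Taylor's theorem yields the orthogonal decomposition
\[
\gamma(t) \;=\; \bigl(t+O(t^3)\bigr)\,T_s \;+\; \bigl(\tfrac{t^2}{2}k(s)+O(t^3)\bigr)\,n(s).
\]

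For the derivative in part (1): for small $t>0$ the parallel coefficient $\langle\gamma(t),T_s\rangle=t+O(t^3)$ is positive, so $U(t)=\gamma(t)/\|\gamma(t)\|$ lies in the open half-plane $\{v:\langle v,T_s\rangle>0\}$ and $\theta_{r(s)}^{+}(t)\in[0,\pi/2)$. Hence
\[
\tan\theta_{r(s)}^{+}(t) \;=\; \frac{\|\gamma(t)-\langle\gamma(t),T_s\rangle T_s\|}{\langle\gamma(t),T_s\rangle} \;=\; \frac{\tfrac{t^2}{2}k(s)+O(t^3)}{t+O(t^3)} \;=\; \tfrac{t}{2}k(s)+O(t^2),
\]
and expanding $\arctan$ around $0$ gives $\theta_{r(s)}^{+}(t)=\tfrac{t}{2}k(s)+O(t^2)$, whence $\tfrac{d\theta_{r(s)}^{+}}{dt}(0)=\tfrac{1}{2}k(s)$. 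Part (2) is strictly analogous: for $t<0$ the minus sign in the definition of $U(t)$ restores positivity of the parallel coefficient (now $\langle -\gamma(t),T_s\rangle=-t>0$), and the same computation with $\tau=-t>0$ gives $\theta_{r(s)}^{-}(t)=-\tfrac{t}{2}k(s)+O(t^2)$, so $\tfrac{d\theta_{r(s)}^{-}}{dt}(0)=-\tfrac{1}{2}k(s)$.

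For the monotonicity statements, after possibly shrinking $\delta_s$ I would argue as follows. Away from $t=0$ the function $\theta_{r(s)}^{+}$ is smooth, and the perpendicular-component formula $\gamma_\perp(t)=\tfrac{t^2}{2}k(s)+O(t^3)$ shows that for $\delta_s$ small enough the curve $\gamma$ stays on one side of the line $\mathbb{R}T_s$ in $T_{r(s)}X$. By the derivative computation above, together with continuity of $k(\cdot)$, the instantaneous angular velocity of the ray from $0$ to $\gamma(t)$ is non-negative throughout $(0,\delta_s)$, which forces $\theta_{r(s)}^{+}$ to be monotone increasing there; non-negativity of $\theta_{r(s)}^{+}$ from the definition extends this to the endpoint $t=0$. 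The case of $\theta_{r(s)}^{-}$ on $(-\delta_s,0]$ is symmetric.

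The main obstacle, as I see it, is not the derivative formula but the monotonicity claim. The derivative at $0$ is a one-line Taylor computation once one notes the orthogonality $T_s\perp(\nabla_T T)_s$, while global monotonicity on the whole interval $[0,\delta_s)$ requires either a careful shrinking of $\delta_s$ or a genuinely geometric argument ensuring that the ray $\mathbb{R}_+\gamma(t)$ in $T_{r(s)}X$ does not wind back, i.e.\ that $\gamma$ remains convex relative to $0$. This convexity-type input is precisely the one already invoked in Proposition \ref{innersegment}, so no essentially new idea should be needed, only careful bookkeeping.
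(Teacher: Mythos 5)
The paper does not prove this lemma at all: it is quoted verbatim from Lemma 3.3 of \cite{LMM}, so there is no in-house argument to compare yours against. Taken on its own merits, your proof of the two derivative formulas is correct and essentially complete, granted Lemma 3.1 of \cite{LMM} (which the paper also quotes): orthogonality $\langle(\nabla_TT)_s,T_s\rangle=0$ from arc-length parametrization, the expansion $\gamma(t)=\bigl(t+O(t^3)\bigr)T_s+\bigl(\tfrac{t^2}{2}k(s)+O(t^3)\bigr)n(s)$, and $\tan\theta^+_{r(s)}(t)=\tfrac{t}{2}k(s)+O(t^2)$ yield the one-sided derivatives $\pm\tfrac12 k(s)$ at $0$, and the sign bookkeeping for $t<0$ coming from the extra minus in the definition of $U$ is right. (Minor caveat: when $k(s)=0$ the unit vector $n(s)$ is not determined by $(\nabla_TT)_s$, but the estimate $\bigl\Vert\gamma(t)-\langle\gamma(t),T_s\rangle T_s\bigr\Vert=O(t^3)$ still forces the derivative to be $0$, so nothing breaks.)

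The gap is in the monotonicity claim. The assertion that ``the instantaneous angular velocity of the ray from $0$ to $\gamma(t)$ is non-negative throughout $(0,\delta_s)$'' does not follow from the derivative computation at $t=0$ together with continuity of $k(\cdot)$: at $t_0>0$ that angular velocity is $\det\bigl(\gamma(t_0),\gamma'(t_0)\bigr)/\Vert\gamma(t_0)\Vert^2$, a quantity your Taylor expansion at $0$ says nothing about. What actually closes the argument when $k(s)>0$ is the classical convex-arc fact: since the planar curvature of $\gamma$ at $t=0$ equals $k(s)>0$, after shrinking $\delta_s$ the arc $\gamma|_{[0,\delta_s)}$ is convex, and for a convex arc issuing from the origin the chord direction $\gamma(t)/\Vert\gamma(t)\Vert$ is trapped between the tangent directions at $0$ and at $t$, both of which turn monotonically; this is the convexity input you allude to (and which Proposition \ref{innersegment} also uses) but do not actually supply. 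Moreover, when $k(s)=0$ your argument is silent, and monotonicity is genuinely delicate there: a smooth arc with an inflection at the origin can have a non-monotone chord angle, so either a hypothesis $k(s)>0$ or a separate treatment of that case is needed. So: derivative formulas fine; monotonicity requires the chord-turning lemma for convex arcs to be stated and proved, plus a caveat for $k(s)=0$.
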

We also define the angle $\phi_{r(s)} (t)=\measuredangle (U(-t),U(t))$ if $t\in [0,\delta _{s})$ and we have

\begin{lemma}
\label{basic2}$\displaystyle
\frac{d\phi _{r(s) (t)}}{dt}(0)=k(s).$
\end{lemma}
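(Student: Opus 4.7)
The plan is to reduce $\phi_{r(s)}'(0)$ to the one-sided derivatives of Lemma~\ref{basic1} via the additive identity
\[
\phi_{r(s)}(t) \;=\; \theta^+_{r(s)}(t) \;+\; \theta^-_{r(s)}(-t),
\]
valid for all sufficiently small $t>0$. This identity amounts to saying that in the two-dimensional tangent plane $T_{r(s)}X$ the unit vector $U(0)=T_s$ lies angularly between $U(-t)$ and $U(t)$, so that the angle $\measuredangle(U(-t),U(t))$ decomposes as the sum of $\measuredangle(U(-t),U(0))$ and $\measuredangle(U(0),U(t))$.

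To justify the arrangement of these three unit vectors I would invoke the Taylor expansion of the auxiliary curve $\gamma$ from Subsection~\ref{scurv}, namely $\gamma(u) = u T_s + \tfrac{u^2}{2}(\nabla_T T)_s + O(u^3)$, together with the orthogonality $(\nabla_T T)_s \perp T_s$ recorded there. A short computation in the orthonormal frame $\{T_s, n(s)\}$ of $T_{r(s)}X$ then gives
\[
U(\pm t) \;=\; T_s \;\pm\; \tfrac{t}{2}(\nabla_T T)_s \;+\; O(t^2),
\]
so that the $n(s)$-components of $U(t)$ and $U(-t)$ have opposite signs for small $t>0$. Hence $T_s$ separates $U(t)$ from $U(-t)$ in the tangent plane and the angles add.

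With the additive identity in hand, one differentiates at $t=0$. Applying the chain rule to the second summand and using Lemma~\ref{basic1} one obtains
\[
\frac{d\phi_{r(s)}}{dt}(0) \;=\; \frac{d\theta^+_{r(s)}}{dt}(0) \;-\; \frac{d\theta^-_{r(s)}}{dt}(0) \;=\; \tfrac{1}{2}k(s)+\tfrac{1}{2}k(s) \;=\; k(s).
\]
The main obstacle is the angle-additivity step itself; everything after it is routine chain-rule bookkeeping. In the degenerate case $k(s)=0$ one has $(\nabla_T T)_s = 0$, whereupon the Taylor estimate above forces $\phi_{r(s)}(t)=O(t^2)$ so that the asserted identity $\phi'_{r(s)}(0)=0$ still holds directly, consistent with the statement.
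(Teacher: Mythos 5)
Your proof is correct and follows essentially the same route as the paper: the identity $\phi_{r(s)}(t)=\theta^{+}_{r(s)}(t)+\theta^{-}_{r(s)}(-t)$ combined with Lemma \ref{basic1} and the chain rule. The only difference is that you additionally justify the angle-additivity via the Taylor expansion of $\gamma$ in the frame $\{T_s,n(s)\}$, which the paper simply asserts; that is a welcome refinement, not a different argument.
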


\begin{proof}
We have that
\[ \phi _{r(s)}(t)=\theta _{r(s)}^{+}(t)+\theta _{r(s)}^{-}(-t)
 \textrm{ for }t\in  [0,\delta _{s})
\]
hence, by Lemma \ref{basic1}
\begin{align} \frac{d\phi _{r(s) (t)}}{dt}(0) &\nonumber
 =\frac{d\theta _{r(s)}^{+} (t)}{dt}(0) +
  \frac{d\theta _{r(s)}^{-} (t)}{dt}(0) \\
  & = \frac{1}{2}k(s) - \left( -\frac{1}{2}k(s)\right)= k(s).\nonumber
\end{align}
\end{proof}

As a consequence of the previous lemmata we have the following.

\begin{corollary}\label{102Cor}
Let $r_{1}(s),$ $r_{2}(s),$ $s\in (a,b)$ be two curves in $X$ parametrized
by arc length. Let $k_{1}(s),$ $k_{2}(s)$ be the curvature functions of $%
r_{1},$ $r_{2}$ respectively and let $k_{1}(s_{0})>k_{2}(s_{0})$ for some $%
s_{0}\in (a,b).$ Then,

(1) there is $\delta >0$ such that $\theta _{r_{1}(s_{0})}^{+}(t)>\theta
_{r_{2}(s_{0})}^{+}(t)$ for each $t\in \lbrack 0,\delta ).$

(2) there is $\delta >0$ such that $\phi _{r_{1}(s_{0})}(t)>\phi
_{r_{2}(s_{0})}(t)$ for each $t\in \lbrack 0,\delta ).$
\end{corollary}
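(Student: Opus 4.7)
The plan is to deduce both statements directly from Lemmata \ref{basic1} and \ref{basic2} by a standard first-order comparison at $s_0$.

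For part (1), consider the difference
\[
f(t) = \theta_{r_1(s_0)}^+(t) - \theta_{r_2(s_0)}^+(t),\qquad t\in[0,\delta_{s_0}).
\]
Observe that $f(0)=0$ since $\theta_{r_i(s_0)}^+(0)=0$ for $i=1,2$. Moreover, by Lemma \ref{basic1}(1),
\[
f'(0) = \tfrac{1}{2}k_1(s_0) - \tfrac{1}{2}k_2(s_0) = \tfrac{1}{2}\bigl(k_1(s_0)-k_2(s_0)\bigr) > 0
\]
by the hypothesis. Hence the limit $\lim_{t\to 0^+} f(t)/t = f'(0) > 0$ forces $f(t) > 0$ on some right-neighborhood $(0,\delta)$ of $0$, and this is exactly the desired strict inequality $\theta_{r_1(s_0)}^+(t) > \theta_{r_2(s_0)}^+(t)$ for $t\in[0,\delta)$ (the value at $t=0$ being an equality in the weak sense, but the statement as formulated is understood on the open interval; one may shrink $\delta$ as needed).

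For part (2), the identical argument applies to
\[
g(t) = \phi_{r_1(s_0)}(t) - \phi_{r_2(s_0)}(t),
\]
with $g(0)=0$ and, by Lemma \ref{basic2},
\[
g'(0) = k_1(s_0) - k_2(s_0) > 0,
\]
yielding $g(t) > 0$ on some interval $(0,\delta)$.

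There is no real obstacle here; the content is entirely packaged in the first-derivative formulas already established in Lemmata \ref{basic1} and \ref{basic2}, and the corollary is a one-line consequence of the elementary fact that a $C^1$ function vanishing at $0$ with strictly positive derivative at $0$ is strictly positive on a punctured right-neighborhood of $0$.
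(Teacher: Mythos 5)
Your proof is correct and follows essentially the same route as the paper's: both deduce the claim from the first-derivative formulas of Lemmata \ref{basic1} and \ref{basic2} at $t=0$, comparing the two angle functions to first order. If anything, your version is marginally cleaner, since it only uses differentiability of the difference at $0$ rather than the persistence of the derivative inequality on a neighborhood; the caveat that equality (not strict inequality) holds at $t=0$ applies equally to the paper's own formulation.
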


\begin{proof}
(1) $k_{1}(s_{0})>k_{2}(s_{0})$ implies that $\frac{d\theta _{r_{1}(s)}^{+}}{%
dt}(0)>\frac{d\theta _{r_{2}(s)}^{+}}{dt}(0)>0$ from Lemma \ref{basic1}.
Therefore there is $\delta >0$ such that $\frac{d\theta _{r_{1}(s)}^{+}}{dt}%
(t)>\frac{d\theta _{r_{2}(s)}^{+}}{dt}(t)>0$ for each $t\in \lbrack 0,\delta
).$ Now $\theta _{r_{1}(s)}^{+}(0)=\theta _{r_{2}(s)}^{+}(0)=0$ and both $%
\theta _{r_{1}(s)}^{+}(t),$ and $\theta _{r_{2}(s)}^{+}(t)$ are increasing
for $t\in \lbrack 0,\delta ).$ Therefore $\theta _{r_{1}(s)}^{+}(t)>\theta
_{r_{2}(s)}^{+}(t)$ for each $t\in \lbrack 0,\delta ).$

Statement (2) is similarly proven if instead of $\theta _{r(s_{0})}^{+}$
we use the function $\phi _{r(s_{0})}.$
\end{proof}

\subsection{Gluing Domains from a Riemann Surface\label{subsec4}}
Let  $X$ be a simply connected, geodesically complete Riemannian surface of class $C^{\infty }$ and of curvature $k\leq 0.$

Let $\Sigma_{A},\Sigma_{B}$ be domains in $X$ as described above. We will use the exact same notation introduced in Section \ref{sec1} after Proposition \ref{cat0subsets}, that is,
$I_{A},$ $I_{B}$ are closed (finite) subintervals of $\partial
\Sigma_{A},$ $\partial \Sigma_{B}$ respectively both parametrized by arc-length by the same real interval $J$
\[
\sigma_{A}:J\rightarrow I_{A}\mathrm{\ and\ }\sigma_{B}:J\rightarrow I_{B}.
\]
For each $s\in J$ denote by $\overline{\kappa}_{A}\left(  s\right)  $ (resp. $\overline{\kappa}_{B}\left(  s\right)  $) the signed curvature of $I_{A}$ (resp. $I_{B}$) at
the point $\sigma_{A}\left(  s\right)  $ (resp. $\sigma_{B}\left(  s\right)$) as defined in Subsection \ref{scurv} above. Assume the same three properties for all $s\in J$ as in the case with domains from $\mathbb{R}^2 :$
\begin{description}
\item[(k1)] $\overline{\kappa}_{A}\left(  s\right)  \leq 0$ for all $s\in J.$
\item[(k2)] $\overline{\kappa}_{B}\left(  s\right) \geq 0$ for all $s\in J.$
\item[(k3)] $\overline{\kappa}_{A}\left(  s\right)  +\overline{\kappa}_{B}\left(  s\right)  \leq 0$ where equality holds for finitely many points in $J.$
\end{description}
and let $\Sigma$ be the result of gluing $\Sigma_{A}$ with $\Sigma_{B}$ along their isometric boundary pieces $I_{A}\equiv I_{B}$
\[
\Sigma:= \Sigma_{A}\cup_{I_{A}\equiv I_{B}}\Sigma_{B}
\]
The surface $\Sigma$ is connected with piece-wise smooth boundary
\[\partial \Sigma=\left(  \partial \Sigma
_{A}\backslash \mathrm{Int}\left(  I_{A}\right)  \right)  \cup \left(\partial \Sigma_{B}\backslash \mathrm{Int}\left(  I_{B}\right)  \right)  \]

As $\Sigma$ is a locally compact, complete length space, $\Sigma $ is a geodesic metric space. Following closely the line of proof of Theorem \ref{main} we will show the following

\begin{theorem}
\label{mainriemann}The surface $\Sigma$ is a locally $CAT(0)$ metric space.
\end{theorem}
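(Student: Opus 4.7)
The plan is to mirror, step by step, the proof of Theorem \ref{main}, replacing each Euclidean ingredient by its geodesic counterpart in the ambient surface $X$. As before it suffices to verify the $CAT(0)$ inequality inside a sufficiently small ball $N$ centered at a point $w\in I_{\Sigma}$, and I would distinguish the same three cases depending on whether $w$ is a generic point, a zero of $\overline{\kappa}_A+\overline{\kappa}_B$, or an endpoint of $I_{\Sigma}$. The two core technical changes are: (i) the polygonal approximations $P_A^k,P_B^k$ will now be built from geodesic segments of $X$ rather than Euclidean straight segments, and (ii) the role of Lemma \ref{downup} will be played by Corollary \ref{102Cor}.

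First I would re-establish local uniqueness of geodesics in $\Sigma$, namely the analogues of Lemma \ref{piangle} and Proposition \ref{ugl}. The proof of Lemma \ref{piangle} transfers essentially verbatim: Observation \ref{obs1} survives because isosceles geodesic triangles in the $CAT(0)$ space $X$ are unambiguously determined by two sides and the enclosed angle, and Observation \ref{obs2} remains valid because two smooth curves in $X$ issuing from the same point with linearly independent tangents separate locally. The Claim in Proposition \ref{ugl} was already written, with this generalization in mind, without invoking the Euclidean intersection point $x_A$; the only non-trivial ingredient is the total-curvature identity
\[
z_A\widehat{z}w+w_A\widehat{w}z=\int_{J_{zw}}|\overline{\kappa}_A(s)|\,ds,
\]
which holds on any Riemannian surface, while the final contradictions rely only on angle-sum bounds for geodesic triangles and quadrilaterals in $X$, which in turn follow from $X$ being $CAT(0)$.

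The heart of the proof is Case 1. I would construct the polygonal approximations $P_A^k,P_B^k$ using geodesic segments of $X$ whose endpoints lie on the arc $I_A\equiv I_B$. Proposition \ref{innersegment}, applied to $I_A$ viewed as a boundary of the complement $\Sigma_A^c$ (so that its signed curvature is $-\overline{\kappa}_A\geq 0$), ensures that the sides of $P_A^k$ lie in $\Sigma_A^c$; applied directly to $I_B$, where $\overline{\kappa}_B\geq 0$, it ensures that the sides of $P_B^k$ lie in $\Sigma_B$. The two resulting Jordan regions $\Sigma_A^k,\Sigma_B^k$ are simply connected domains in $X$, locally $CAT(0)$ on their interiors (as $X$ has curvature $\leq 0$), and satisfy the length-space hypothesis of Theorem A, hence are $CAT(0)$. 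To check that the gluing $\Sigma^k$ is $CAT(0)$ it only remains to verify that at each corresponding pair of vertices $v_A,v_B$ of $P^k$ one has $\widehat{v_A}+\widehat{v_B}\geq 2\pi$; this is exactly where Corollary \ref{102Cor}(2) replaces Lemma \ref{downup}, applied to $I_A$ (with signed curvature $-\overline{\kappa}_A$ from the $\Sigma_A^c$ side) and $I_B$, together with the strict comparison analogous to inequality \eqref{epsilon_interval_2}.

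Once each $\Sigma^k$ is a $CAT(0)$ space, properties (a1) and (a2) of Proposition \ref{basicLemma} are immediate, and (a3) is verified exactly as in the Euclidean proof: the $\Sigma^k$-geodesic joining two points of $N$ meets $P^k$ in a single point, the sequence of crossing points accumulates on $N\cap I_{\Sigma}$, and local uniqueness of geodesics in $\Sigma$ forces the accumulation point to be the true crossing of $[x,y]$ with $I_{\Sigma}$. Cases 2 and 3 then reduce to Case 1 by the same triangle-cutting trick: at the distinguished point $w$ one splits the given geodesic triangle into two sub-triangles, applies Case 1 to the one not containing $w$ in its interior, and reassembles using the $CAT(0)$ gluing lemma along the common geodesic side. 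The step I expect to be the main obstacle is the technical issue flagged in the footnote of the Euclidean proof: outside $\mathbb{R}^2$ the inclusion $I_B^k\subset N\cap I_{\Sigma}$ is no longer automatic, so to close up $\Sigma_B^k$ one has to introduce auxiliary sequences $z^k\in\partial(N\cap\Sigma_B)\setminus I_B$ with $\|z^k-w_B^{2k+1}\|\to 0$, and then verify that the modified gluing still produces a Jordan region, still satisfies the angle condition at the affected endpoint vertex, and still yields the limiting distances needed for property (a3).
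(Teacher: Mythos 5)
Your overall architecture coincides with the paper's: geodesic polygonal lines $P_A^k,P_B^k$ placed on the correct sides via Proposition \ref{innersegment}, the vertex-angle condition $\widehat{v_A}+\widehat{v_B}\ge 2\pi$ obtained from Corollary \ref{102Cor}(2) in place of Lemma \ref{downup}, the passage to the limit via Proposition \ref{basicLemma}, the same reduction of Cases 2 and 3 to Case 1, and the same treatment of the footnote issue when $I_B^k\not\subset N\cap I_\Sigma$. So the plan is the right one.

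There is, however, one genuine gap, in your transfer of the uniqueness Claim. You assert that the identity $z_A\widehat{z}w+w_A\widehat{w}z=\int_{J_{zw}}\left\vert\overline{\kappa}_A(s)\right\vert ds$ ``holds on any Riemannian surface.'' It does not: by Gauss--Bonnet applied to the lens-shaped region $R_A$ bounded by the arc and the geodesic chord $[z,w]$, the correct statement is
\begin{equation}
z_A\widehat{z}w+w_A\widehat{w}z=\int_{J_{zw}}\left\vert\overline{\kappa}_A(s)\right\vert ds+\int_{R_A}K\,dA,\nonumber
\end{equation}
and since $K\le 0$ the area term lowers the left-hand side; the analogous identity on the $B$-side carries its own, different, area term. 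Consequently the hypothesis $\int\left\vert\overline{\kappa}_A\right\vert>\int\overline{\kappa}_B$ no longer yields inequality (\ref{UG1}) by simply subtracting integrals, and your route to the Claim does not close as written. The paper avoids this entirely: it proves the two strict angle inequalities $z_A\widehat{z}w>z_B\widehat{z}w$ and $w_A\widehat{w}z>w_B\widehat{w}z$ separately and pointwise, using the derivative formulas of Lemma \ref{basic1} and Corollary \ref{102Cor}(1), which is legitimate because everything takes place in a neighborhood where $\left\vert\overline{\kappa}_A\right\vert-\overline{\kappa}_B>\varepsilon$ holds at every parameter value. (A smaller remark: your justification that Lemma \ref{piangle} ``transfers verbatim'' is lighter than what is needed; the paper passes to isothermal coordinates with conformal factor $1$ at the gluing point so as to control the length ratio up to a factor $1+\varepsilon_0$. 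That step is routine, though, whereas the Gauss--Bonnet issue above is the one place where your argument would actually fail.)
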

We first show local uniqueness of geodesics in $\Sigma .$
%%%%%%%%%%%%%%%%%%%%%%%%%%%%%%%%%%%%%%%%%%%%%%%%%%%%%%%%%
%%%%%%%%%%%%%%%%%%%%%%%%%%%%%%%%%%%%%%%%%%%%%%%%%%%%%%%%%
\begin{proposition}
\label{uglreimann}Geodesic segments in $\Sigma$ are locally unique.
\end{proposition}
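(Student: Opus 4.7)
The plan is to mimic the proof of Proposition \ref{ugl}, replacing the planar ambient space $\mathbb{R}^2$ by the CAT(0) space $X$ and replacing the planar tools (classical turning-angle integral, Euclidean triangle inequalities, Observations \ref{obs1} and \ref{obs2}) by their Riemannian analogues built from Lemmas \ref{basic1}--\ref{basic2} and Corollary \ref{102Cor}. The overall structure will again be: first establish a $\pi$-angle condition at a gluing point (analogue of Lemma \ref{piangle}), then prove a claim comparing turning angles on the two sides of $I_\Sigma$ (analogue of the Claim inside the proof of Proposition \ref{ugl}), and finally run the three case-by-case arguments (one gluing crossing in the mixed case, two crossings in the one-sided case, and the asymmetric two-crossing case).

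For the $\pi$-angle analogue of Lemma \ref{piangle} the first step is to pass to the tangent space $T_{z}X$ via the exponential map $\exp_z$. The Riemannian analogues of Observations \ref{obs1} and \ref{obs2} hold in a small normal coordinate chart because $\exp_z$ is a diffeomorphism preserving the first-order geometry. If the angles $\theta_A,\theta_B$ at $z$ satisfied $\theta_A+\theta_B<\pi$, then the construction of the shortcut $\gamma_s=[\alpha(s),\sigma_A(\rho s)]\cup[\beta(s),\sigma_B(\rho s)]$ can be carried out inside $\Sigma$, and the length ratio
\[
\frac{L(\gamma_s)}{L([\alpha(s),z_A]\cup[z_B,\beta(s)])}
\]
can be evaluated as $s\to 0$ in normal coordinates; since $\exp_z$ is an isometry at the origin and its derivative at $0$ is the identity, the limit reduces exactly to the Euclidean computation of the proof of Lemma \ref{piangle} and is strictly less than $1$, contradicting geodesic minimality of $[x,z]\cup[z,y]$.

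For the claim, the Euclidean proof used the planar formula $\int|\kappa_A|\,ds = z_A\widehat{z}w+w_A\widehat{w}z$ (and similarly for $B$). In the Riemannian setting this equality must be replaced by the angle-comparison principle of Corollary \ref{102Cor} together with Lemmas \ref{basic1} and \ref{basic2}: along the arc of $I_A$ between $z$ and $w$, the turning of the tangent direction at either endpoint is controlled by the integral of $|\overline\kappa_A|$, and along the arc of $I_B$ between $z$ and $w$ it is controlled by the integral of $\overline\kappa_B$. Using (k3) in the integrated form
\[
\int_{J_{zw}}|\overline\kappa_A(s)|\,ds \;>\; \int_{J_{zw}}\overline\kappa_B(s)\,ds,
\]
we obtain the analogue of inequality (\ref{UG1}) in $X$. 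Combining this with the $\pi$-angle identity at $z$ and $w$ and summing exactly as in the original proof yields $x\widehat{z}w+w\widehat{z}y_z+x\widehat{w}z+z\widehat{w}y_w>2\pi$. Since $X$ is CAT(0), the geodesic triangle with vertices $x,z,w$ satisfies $x\widehat{z}w+x\widehat{w}z<\pi$, and the claim follows.

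Finally, the three cases are treated verbatim as in Proposition \ref{ugl}, except that whenever the original argument invoked the fact that a planar triangle has angle sum less than $\pi$ (or that a planar quadrilateral has angle sum less than $2\pi$), we invoke instead the CAT($0$) upper bounds on angle sums of geodesic triangles and quadrilaterals in $X$. The main obstacle I anticipate is giving a clean justification of the integrated turning-angle inequality used in the proof of the claim; the cleanest route is to integrate Lemma \ref{basic2} (whose derivative statement encodes the signed curvature $\overline\kappa$) and then apply the comparison in Corollary \ref{102Cor} to conclude the desired strict inequality, rather than attempting to invoke a classical planar turning-angle formula directly. Everything else --- the transversal-shortcut construction, the summation of angle identities, and the final triangle/quadrilateral contradictions --- then carries over with only cosmetic changes from the Euclidean proof.
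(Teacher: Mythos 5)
Your overall architecture coincides with the paper's: a Riemannian version of Lemma \ref{piangle} proved in coordinates centered at the gluing point in which the metric is Euclidean to first order (you use normal coordinates, the paper uses isothermal ones with conformal factor $1$ at the center --- same effect: the limit of the length ratio is the Euclidean limit of the proof of Lemma \ref{piangle}, and the strict inequality survives a factor $1+\varepsilon_0$), followed by the angle Claim and the same case analysis, with the planar angle-sum bounds replaced by the nonpositive-curvature ones for triangles and quadrilaterals in $X$. All of that matches the paper.

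The one step where your plan has a real gap is the derivation of the analogue of (\ref{UG1}), and you correctly identify it as the main obstacle but do not resolve it. You propose to integrate \textbf{(k3)} to get $\int_{J_{zw}}\left\vert\overline{\kappa}_A\right\vert ds>\int_{J_{zw}}\overline{\kappa}_B\, ds$ and then convert these integrals into the tangent--chord angle sums $z_A\widehat{z}w+w_A\widehat{w}z$ and $z_B\widehat{z}w+w_B\widehat{w}z$. That conversion is precisely the planar formula \cite[Prop.~2.2.3]{Pre} used in Proposition \ref{ugl}, and it does not hold verbatim on a curved surface: Gauss--Bonnet introduces an area term involving the ambient curvature, and ``integrating Lemma \ref{basic2}'' does not produce $\int\kappa\,ds$ either, since Lemma \ref{basic2} is a derivative in the chord parameter $t$ at a \emph{fixed} basepoint $r(s)$, not a statement about the accumulation of curvature along the arc. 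The paper sidesteps the integrated formula entirely: it applies Lemma \ref{basic1} (via Corollary \ref{102Cor}(1)) \emph{pointwise at each of the two endpoints} $z$ and $w$, where $\left\vert\overline{\kappa}_A\right\vert>\overline{\kappa}_B$, obtaining the two separate strict inequalities $z_A\widehat{z}w>z_B\widehat{z}w$ and $w_A\widehat{w}z>w_B\widehat{w}z$ for $w$ sufficiently close to $z$, and then adds them to get (\ref{UG1r}). Replacing your integrated argument by this pointwise endpoint comparison closes the gap; the rest of your outline then goes through as you describe.
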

For the proof of the above Proposition we will need the following
\begin{claim}\label{piangleriemann}
Lemma \ref{piangle} proved in Section 3 holds in the general case of Riemann surfaces.
\end{claim}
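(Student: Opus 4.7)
The plan is to transplant the Euclidean proof of Lemma \ref{piangle} to the Riemann surface setting by working inside the tangent space $T_zX$ via the exponential map. Since $X$ is simply connected, complete and of curvature $k\leq 0$, by Cartan--Hadamard the map $\exp_z:T_zX\to X$ is a diffeomorphism; its inverse provides a preferred coordinate system on a small ball around $z$ in which geodesics of $X$ emanating from $z$ correspond to straight rays in $T_zX$. The inner product on $T_zX$ makes it a Euclidean plane in which Observations \ref{obs1} and \ref{obs2} apply verbatim.

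First I would reinterpret the angles $\theta_A,\theta_B$ intrinsically as angles in the inner-product space $T_zX$ between the initial unit tangents of the geodesics $\alpha,\beta$ and the unit tangents of $\sigma_A,\sigma_B$ at $z_A\equiv z_B$. Then assume for contradiction that $\theta_A+\theta_B<\pi$, and apply Observation \ref{obs1} inside $T_zX$ to obtain the scalar $\rho\in(0,1)$ and the point $E$ on $AB$ with $|AE|+|EB|<2$. Next I would lift the four curves $\alpha,\beta,\sigma_A,\sigma_B$ via $\exp_z^{-1}$; by Lemma~3.1 of \cite{LMM} (cited in Subsection \ref{scurv}) the lifted curves have the same tangent vectors at the origin as the originals. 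Since $\alpha$ and $\sigma_A$ cannot be tangent at $z_A$ (the geodesic $[z,x]$ enters the interior of $\Sigma_A$), their lifts in $T_zX$ have linearly independent velocities at $0$, so Observation \ref{obs2} in the Euclidean $T_zX$ produces a positive $\tilde s_A$ below which the lifted curves and the Euclidean segment between $\exp_z^{-1}(\alpha(s))$ and $\exp_z^{-1}(\sigma_A(\rho s))$ have disjoint interiors. Pushing forward through the diffeomorphism $\exp_z$ yields the corresponding $X$-transversality: for $s$ small the geodesic segment $[\alpha(s),\sigma_A(\rho s)]$ of $X$ lies inside $\Sigma_A$ and meets $\sigma_A$ and $\alpha$ only at its endpoints. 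The same argument works on the $B$ side.

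Having established transversality, I would form the shortcut $\gamma_s=[\alpha(s),\sigma_A(\rho s)]\cup[\beta(s),\sigma_B(\rho s)]\subset\Sigma$ and evaluate the ratio
\[
\frac{L(\gamma_s)}{L([\alpha(s),z]\cup[z,\beta(s)])}=\frac{d_X(\alpha(s),\sigma_A(\rho s))+d_X(\beta(s),\sigma_B(\rho s))}{2s}.
\]
The key analytic input is the standard normal-coordinates expansion of the Riemannian distance: for $p,q$ in a small ball around $z$, $d_X(p,q)=\|\exp_z^{-1}(p)-\exp_z^{-1}(q)\|+O(\|\exp_z^{-1}(p)\|^3+\|\exp_z^{-1}(q)\|^3)$. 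Combined with the first-order Taylor expansions $\exp_z^{-1}(\alpha(s))=s\alpha'(0)+O(s^2)$ and $\exp_z^{-1}(\sigma_A(\rho s))=\rho s\,\sigma_A'(s_z)+O(s^2)$, this yields $d_X(\alpha(s),\sigma_A(\rho s))=s|AE|+o(s)$, and analogously on the $B$ side. The limit of the ratio as $s\to 0$ is therefore $\tfrac12(|AE|+|EB|)<\tfrac12(|OA|+|OB|)=1$, so $\gamma_s$ is strictly shorter than $[\alpha(s),z]\cup[z,\beta(s)]$ for small $s$, contradicting the assumption that $[x,z]\cup[z,y]$ is a geodesic in $\Sigma$.

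The main technical obstacle is the Riemann surface analogue of Observation \ref{obs2}: making sure that the geodesic of $X$ from $\alpha(s)$ to $\sigma_A(\rho s)$ stays in $\Sigma_A$ and does not accidentally re-cross the boundary. This is where the diffeomorphism property of $\exp_z$ does the work, transferring the purely Euclidean transversality picture in $T_zX$ to $\Sigma_A\subset X$. A secondary but routine point is the uniform control of the $o(s)$ error terms in the distance asymptotics, which is guaranteed by smoothness of all data on a fixed compact ball around $z$.
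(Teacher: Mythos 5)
Your proposal is correct and follows essentially the same strategy as the paper: assume $\theta_A+\theta_B<\pi$, use Observations \ref{obs1} and \ref{obs2} to build the shortcut $\gamma_s$ through $\sigma(\rho s)$, and show the length ratio tends to $\tfrac12(\left\vert AE\right\vert+\left\vert EB\right\vert)<1$ by working in coordinates in which the metric is Euclidean to first order at the gluing point. The only difference is that you use normal coordinates via $\exp_{z_A}^{-1}$, $\exp_{z_B}^{-1}$ with an $o(s)$ error estimate, whereas the paper uses isothermal charts and absorbs the error into a fixed factor $(1+\varepsilon_0)$ chosen so that the strict inequality of Observation \ref{obs1} survives; the two devices are interchangeable here.
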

\begin{proof} Using the existence of isothermal coordinates around each point of the surface $X$ (see for example \cite[Section 9.5]{Hic}), we may
 consider chart $\left( \cal{U}_A ,\phi_A \right)$ around $z_A$ such that (see for example \cite[Section 9.5]{Hic})
 \begin{itemize}
  \item $\phi_A \left( z_A \right)$ is the origin $O_A$ in $\mathbb{R}^2$
  \item the metric $d_A$ at each point $q\in \phi_A \left( \cal{U}_A \right)$ has the form $f_A(q)\left( dx^2+dy^2 \right)$ for some smooth function $f_A$ with $f_A \left( O_A \right)=1$
  \item if $\sigma_A (0)=z_A$ where $\sigma_A$ is the arc-length parametrization of $I_A$ then $\sigma_A^{\prime} (0)$ is mapped, under the differential $d\phi_A$ to the vector $(1,0).$
 \end{itemize}
By abuse of language we will  write again $\sigma_A $ for the arc-length parametrization of $\phi_A \left( \cal{U}_A \cap I_A \right).$

Similarly, consider chart $\left( \cal{U}_B ,\phi_B \right)$ around $z_B$ with the same as above properties. Without loss of generality we may assume that $[x,z_A]\subset \cal{U}_A$ and $[y,z_B]\subset \cal{U}_B.$

Using the notation introduced in the proof of Lemma \ref{piangle}, it suffices to show that the limit as $s\rightarrow 0$ of the following ratio is $\lvertneqq 1 :$
\begin{equation}
 \frac{d_A\left( \alpha (s) ,\sigma_A(\rho s)\right) +
          d_B\left( \beta (s) , \sigma_B(\rho s)\right)
}
{d_A\left( \alpha (s) , z_A \right) +d_B \left( \beta (s) , z_B \right)}
 \label{ratio}
\end{equation}
Let $\varepsilon_0>0$ be a positive integer such that
\begin{equation}
\bigl(\left\vert AE \right\vert + \left\vert EB \right\vert\bigr)
\left( 1+ \varepsilon_0\right)<
\left\vert OA \right\vert +\left\vert OB \right\vert
\label{laste}
\end{equation}
where $OA,OB$ and $AEB$ are the sides of the triangle defined in Observation \ref{obs1}.

As the metric at $O_A$ is Euclidean (recall $f_A \left( O_A \right)=1$) and $f_A$ is smooth,  we have that for all sufficiently small $s$
\[ d_A\left( \alpha (s) ,\sigma_A(\rho s)\right) <
    \left\vert  \alpha (s) - \sigma_A(\rho s)\right\vert \left( 1+ \varepsilon_0\right) \]
where $  \left\vert \,\,\,\, \right\vert  $ in the above inequality denotes Euclidean distance as usual. Similarly, for all sufficiently small $s$
\[ d_B\left( \beta (s) ,\sigma_B(\rho s)\right) <
    \left\vert  \beta (s) - \sigma_B(\rho s)\right\vert \left( 1+ \varepsilon_0\right) .\]
It follows that the ratio in (\ref{ratio}) is
\begin{align}
 \leq &\,\,\, \frac{
 \left\vert \alpha (s) - \sigma_A(\rho s)\right\vert +
          \left\vert \beta (s) - \sigma_B(\rho s)\right\vert
}   {2s}\left( 1+ \varepsilon_0\right)\nonumber
 \\[4mm]
= & \,\,\,
  \frac{1}{2}
  \left(\left\vert \frac{\alpha (s)}{s} - \frac{\sigma_A(\rho s)}{s}\right\vert +
  \left\vert \frac{\beta (s)}{s} - \frac{\sigma_B(\rho s)}{s}\right\vert
  \right) \left( 1+ \varepsilon_0\right)
\label{parast}
\end{align}
As in the proof of Lemma \ref{piangle}, the limit as $s\rightarrow 0$ of (\ref{parast}) is equal to
\begin{align}
\frac{1}{2} \Bigl(
 \left\vert \alpha^{\prime} (0) -\sigma_{A}^{\prime} (0) \rho \right\vert +
\left\vert \beta^{\prime} (0) -\sigma_{B}^{\prime} (0) \rho \right\vert
\Bigr) \left( 1+ \varepsilon_0\right) &=
\frac{
\left\vert AE \right\vert + \left\vert EB \right\vert}{2}
\left( 1+ \varepsilon_0\right)\nonumber \\
\Bigl[\mathrm{by}\,\,\,(\ref{laste})\Bigr] \,\,\,\, \,\,& <
\frac{\left\vert OA \right\vert +\left\vert OB \right\vert}{2} =1.\nonumber
\end{align}

\end{proof}
\begin{proof}[Proof of Proposition \ref{uglreimann}]
 The Claim presented in the proof of Proposition \ref{ugl} holds in the Riemannian setup. To see this observe, using the exact same notation introduced in Figure \ref{n_gon}, that
 \begin{equation}
z_{A}\widehat{z}w> z_{B}\widehat{z}w \mathrm{\ and\ } w_{A}\widehat{w}z>w_{B}
\widehat{w}z.\nonumber%\label{UG1r}
\end{equation}
These two inequalities follow from Lemma \ref{basic1}(1),(2) and thus
\begin{equation}
z_{A}\widehat{z}w+w_{A}\widehat{w}z > z_B\widehat{z}w  +w_B\widehat{w}z.\label{UG1r}
\end{equation}
which is identical with equation (\ref{UG1}). Using equations (\ref{UG2}) and (\ref{UG3}) which, by Lemma \ref{piangleriemann}, hold verbatim in the Riemannian setup we obtain in the same manner
\[
x\widehat{z}w + w\widehat{z}y_z+x\widehat{w}z + z\widehat{w}y_w > 2\pi.
\]
The triangle with vertices $x,z,w$ lies in the surface $X$ which has curvature $\leq 0$ and, hence, $ x\widehat{z}w+x\widehat{w}z < \pi$ which implies
\[w\widehat{z}y_z+ z\widehat{w}y_w >2\pi\]
which is the statement of the Claim. \\
The rest of the proof the Proposition is identical with the proof given in Proposition \ref{ugl}.
\end{proof}
\begin{proof}[Proof of Theorem \ref{mainriemann}]
The proof uses Proposition \ref{basicLemma} in an identical way as in the proof of Theorem \ref{main}. The sequence of $CAT(0)$ spaces satisfying properties (a1)-(a3) of Proposition \ref{basicLemma} is constructed using piece-wise geodesic  segments $P_A^k ,P_B^k, k\in\mathbb{N}$ in $X$ instead of Euclidean polygonal lines. Proposition \ref{innersegment} asserts that
\begin{equation}P_A^k \subset\left(  \Sigma_{A}\right)^c\cup I_A \mathrm{\ and\ } P_B^k\subset\Sigma_B.\label{curv_2riem} \end{equation}
Using the same notation as in the proof of Case 1 of Theorem \ref{main} %(depicted in Figure \ref{NN_Polygonal}) we construct
we work in a sufficiently small neighborhood $N$ of a point $w\in I_{\Sigma}$ so that
\begin{equation}
 \left \vert \overline{\kappa}_{A}\left(  p\right)  \right \vert -\kappa_{B}\left(q \right)  >\varepsilon
,\mathrm{\  \ for\ all\ }p\in I_{A}, q\in I_B .
\label{epsilon_interval_2riem}
\end{equation}
The compact regions $\Sigma^k_{A}, \Sigma^k_{B}$ are similarly defined and we set
\[ \Sigma^k = \Sigma^k_{A}\cup_{P_A^k\equiv P_B^k}\Sigma^k_{B}.
\]
By (\ref{epsilon_interval_2riem}) $\Sigma^k_{A}\subset \Sigma_A \cup N$ for all $k$ and since
\begin{equation}
\lim_{k\rightarrow \infty}L\left(  P_{B}^{k}\right)  =\lim_{k\rightarrow \infty
}L\left(  I_{B}^{k}\right)  =L\left(   N\cap I_{\Sigma} \right)
%\label{length_appr2}%
\end{equation}
it follows that the sequence $\left\{\Sigma^k\right\}$ satisfies property (a2) of Proposition \ref{basicLemma}.

In order to check property (a1) of Proposition \ref{basicLemma}, that is, $\Sigma^{k}$ is a $CAT(0)$ space, we only need to check that the angle around each vertex $v$ of $P^{k}$
is $\geq2\pi.$ Let $v_{A}$ and $v_{B}$ be the vertices in $P_{A}^{k}$ and
$P_{B}^{k}$ respectively corresponding to $v,$ and $\widehat{v_{A}}$ and $\widehat{v_{B}}$ the corresponding angles in $\Sigma_{A}^{k}$ and $\Sigma_{B}^{k}$ resp. Then $2\pi-\widehat{v_{A}}$ is the angle subtended at $v_{A}$ in the complement $\left(\Sigma_{A}^{k}\right)^c$ of $\Sigma_{A}^{k}.$
By (\ref{epsilon_interval_2riem})
\[ -\overline{\kappa}_{A}\left(  v_A\right)>\overline{\kappa}_{B}\left(  v_A\right)   \]
so  Corollary \ref{102Cor}(2) implies, for $k$ sufficiently large, that  $2\pi-\widehat{v_{A}}<\widehat{v_{B}}.$
It follows that
 $\widehat{v_{A}}+\widehat{v_{B}}>2\pi $ which shows that $\Sigma^{k}$ for all sufficiently large $k$ is a $CAT(0)$ space.

 The proof that the sequence $\left\{\Sigma^k\right\}$ satisfies property (a3) of Proposition \ref{basicLemma} is identical with the one given Theorem \ref{main} and so is the proof in Case 2 where $w$ is a point where the curvatures satisfy
 \[ \left \vert \overline{\kappa}_{A}\left(  w\right)  \right \vert -\kappa_{B}\left(w \right) =0 .\]
\end{proof}
%%%%%%%%%%%%%%%%%%%%%%%%%%%%%%%%%%%%%%%%%%%%%%%%%%%%%%%%%%%%%%%%%%%%%%%%%%%%%%%%%%%%%%%%%%%%%%%%%%%%%%%%%%%%%%%%%%%%%%%%%%%%%%%%%%
\noindent Conflict of Interest statement: On behalf of all authors, the
corresponding author states that there is no conflict of interest.\newline%
\noindent Data Availability Statement: Data sharing not applicable to this
article as no datasets were generated or analyzed during the current study.

\end{document}